\newtheorem{thm}{Theorem}[section]
\newtheorem{cor}[thm]{Corollary}
\newtheorem{prop}[thm]{Proposition}
\newtheorem{conj}[thm]{Conjecture}
\theoremstyle{definition}
\newtheorem{defn}[thm]{Definition}
\newtheorem{ex}[thm]{Example}
\newtheorem{rmk}[thm]{Remark}
\newtheorem{ques}[thm]{Question}
\DeclareMathOperator{\Gal}{Gal}
\DeclareMathOperator{\trdeg}{trdeg \,}
\newcommand{\C}{\ensuremath\mathbb{C}}
\newcommand{\R}{\ensuremath\mathbb{R}}
\newcommand{\Z}{\ensuremath\mathbb{Z}}
\newcommand{\Q}{\ensuremath\mathbb{Q}}
\newcommand{\HH}{\ensuremath\mathrm{H}}
\newcommand{\B}{\ensuremath\mathrm{B}}
\newcommand{\dR}{\ensuremath\mathrm{dR}}
\newcommand{\mot}{\ensuremath\mathrm{mot}}
\newcommand{\Mot}{\ensuremath\mathrm{Mot}}
\newcommand{\Hdg}{\ensuremath\mathrm{Hdg}}
\renewcommand{\And}{\ensuremath\mathrm{And}}
\newcommand{\an}{\ensuremath\mathrm{an}}
\newif\ifHideFoot
\newcommand{\Tobias}[1]{}
\newcommand{\marg}[1]{\normalsize{{
            \color{red}\footnote{{\color{blue}#1}}}{\marginpar[\vskip
            -.25cm{\color{red}\hfill$\Rightarrow$\tiny\thefootnote}]{\vskip
                -.2cm{\color{red}$\Leftarrow$\tiny\thefootnote}}}}}
\newcommand{\Tobias}[1]{\marg{(Tobias) #1}}
    \def\MR#1{}
\begin{document}
 \title{Hodge structures not coming from geometry}
 \begin{abstract}
 Hodge theory associates to a smooth projective variety over $\C$ a piece of linear algebra information, called a $\Q$-Hodge structure. 
 Conversely, it is a natural question which abstract $\Q$-Hodge structures arise from the cohomology of a smooth projective complex variety, or more generally, from a pure motive over $\C$.
 By a classical argument involving Griffiths transversality and a Baire category argument, it is well known that there are many Hodge structures which do not come from geometry in this sense.
 However, the argument is not constructive, and does not seem to give a criterion to decide whether a given Hodge structure comes from geometry.
  We formulate an intrinsic condition on a $\Q$-Hodge structure that we expect to be satisfied for all Hodge structures coming from geometry.
  We prove that this expectation follows from the conjunction of two fundamental conjectures in Hodge theory and transcendence theory:
the conjecture that Hodge cycles are motivated and André's generalized Grothendieck period conjecture.
  By doing so, we exhibit explicit examples of $\Q$-Hodge structures which should not come from geometry.
\end{abstract}
   
    \author{Tobias Kreutz}

    \address{Max Planck Institute for Mathematics, Bonn (Germany)}
    \email{kreutz@mpim-bonn.mpg.de}

    \date{\today}

\maketitle

\section*{Introduction}

Let $X$ be a smooth projective variety over $\C$. 
The fundamental insight of Hodge theory is that Betti cohomology $\HH_\B^n(X, \Q) = \HH^n_{\mathrm{sing}}(X^{\an}, \Q)$ carries a Hodge structure:
there is a canonical \emph{Hodge decomposition} of complex vector spaces
$$ \HH_\B^n(X, \Q) \otimes_\Q \C = \bigoplus_{p+q=n} \HH^{p,q}(X)$$
with the property that $\overline{\HH^{p,q}(X)} = \HH^{q,p}(X)$.

Hodge theory therefore associates to the geometric object $X$ a linear algebra datum which can be abstracted into the following definition:

\begin{defn}
A \emph{pure $\Q$-Hodge structure of weight $n$} is a finite dimensional $\Q$-vector space $H$ together with a decomposition of complex vector spaces
$$H \otimes_\Q \C =\bigoplus_{p+q=n} H^{p,q}$$
such that $\overline{H^{p,q}} = H^{q,p}$.

\end{defn}

\subsection{Hodge structures coming from geometry}
In this note, we are interested in necessary conditions for an abstract Hodge structure $H$ to arise as the cohomology $\HH_\B^n(X, \Q)$ of a smooth projective variety $X$ over $\C$.
In fact, we will ask the slightly more general question which Hodge structures arise as the realization of a pure \emph{motive} over $\C$. We use the framework of André motives as developed in \cite{AndreIHES}, cf. section \ref{sectionAndre}.

\begin{defn}
We say that a $\Q$-Hodge structure $H$ \emph{comes from geometry} if it is isomorphic to the Hodge realization of an André motive over $\C$.
\end{defn}
If $H$ is irreducible, then modulo the conjecture that Hodge cycles are motivated, this is the same as saying that $H$ is a Tate twist of a direct summand of the Hodge structure $\HH_\B^n(X, \Q)$ of some smooth projective variety $X$ over $\C$ for some $n$.

The aim of this note is to discuss the following question:
\begin{ques}
Which $\Q$-Hodge structures $H$ come from geometry?
\end{ques}

The first observation is that $H$ is necessarily polarizable.
Recall that a polarization of a pure Hodge structure $H$ of weight $n$ is a morphism of Hodge structures $H \otimes_\Q H \to \Q(-n)$ which satisfies a positivity property on certain pieces of the Hodge structure.
We therefore assume all of our Hodge structures to have this property.

One may ask whether all polarizable Hodge structures come from geometry.
This turns out to be far from true:
using Griffiths transversality and a Baire category argument, it is known that most polarizable Hodge structures do not come from geometry (cf. section \ref{distribution}).
However, the argument is not constructive: it does not give examples of Hodge structures which are not expected to come from geometry.

In \cite[page 300, footnote \ddag]{GrothendieckHodge}, Grothendieck writes:

\begin{quote}
It seems that there is no necessary intrinsic condition known for an abstract Hodge structure to be embeddable in one coming from a projective smooth scheme over $\C$, except the existence of a "polarization" -- although (as Mumford pointed out to me) Griffiths's general transversality theorem implies (by a Baire argument) that there are many Hodge structures of given degree $\ge 2$ which are not "algebraical" in the previous sense. Of course, any necessary condition of algebraicity would be highly interesting!
\end{quote}

\subsection{A Hodge theoretic condition}

The goal of this paper is to formulate a conjectural necessary intrinsic condition for a Hodge structure to come from geometry. 
By doing so, we give explicit examples of Hodge structures which should not come from geometry.

To formulate the condition, we define two invariants of the $\Q$-Hodge structure $H$: 
the \emph{transcendence degree} $\trdeg H$ and the \emph{horizontal codimension} $\mathrm{hcodim \,} H$ (cf. Definition \ref{trdeg} and Definition \ref{defnhorizontalcodimension}).
We conjecture the following: 

\begin{conj}\label{conj}
Let $H$ be a $\Q$-Hodge structure such that
\begin{equation} \label{inequality} \trdeg H < \mathrm{hcodim \,} H. \end{equation}
Then $H$ does not come from geometry.
\end{conj}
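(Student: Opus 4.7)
The plan is a proof by contradiction under the two stated conjectures. Suppose $H$ comes from geometry, so $H = R_\B(M)$ for some André motive $M$ over $\C$; I aim to show $\trdeg H \ge \mathrm{hcodim \,} H$, contradicting \eqref{inequality}.

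First, the conjecture that Hodge cycles are motivated, applied to the Tannakian category generated by $M$, gives $G_\mot(M) = \mathrm{MT}(H)$, identifying the motivic Galois group with an invariant intrinsic to $H$. Next, by André's generalized Grothendieck period conjecture applied to the period torsor $\Omega(M)$ of $M$, any $\C$-point $\omega \in \Omega(M)(\C)$ satisfies $\trdeg(\omega/\Q) = \dim \Omega(M) = \dim G_\mot(M) = \dim \mathrm{MT}(H)$. The Hodge filtration $[H] \in \check D$ arises as the image of $\omega$ under the $G_\mot$-equivariant morphism $\Omega(M) \to \check D$, $\omega \mapsto \omega(F^\bullet_\dR)$. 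The image of this morphism is the Mumford--Tate orbit $\mathrm{MT}(H) \cdot [H] \subset \check D$, a $\Q$-subvariety of which $[H]$ is a generic $\C$-point since $\omega$ is. Hence $\trdeg H$ equals $\dim(\mathrm{MT}(H) \cdot [H])$.

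It then remains to establish the inequality $\dim(\mathrm{MT}(H) \cdot [H]) \ge \mathrm{hcodim \,} H$. Using the Hodge-type decomposition $\mathfrak{mt}(H)_\C = \bigoplus_p \mathfrak{mt}(H)^{p,-p}$, the left-hand side equals $\sum_{p < 0} \dim \mathfrak{mt}(H)^{p,-p}$. The quantity $\mathrm{hcodim \,} H$ should be expressible (per Definition~\ref{defnhorizontalcodimension}) as a sum of pieces $\dim \mathfrak{gl}(H)^{p,-p}$ for $p \le -2$, measuring the non-horizontal directions in the tangent to $\check D$ at $[H]$. The inequality then reduces to showing that $\mathfrak{mt}(H)^{-1,1}$, the horizontal piece of the Mumford--Tate Lie algebra, compensates via the bracket structure for the difference between $\mathfrak{mt}$ and $\mathfrak{gl}$ in the lower Hodge-type pieces.

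The main obstacle is this Lie-algebraic comparison. Without further hypotheses, $\mathfrak{mt}(H)$ can be much smaller than $\mathfrak{gl}(H)$, and one must argue that its horizontal part is nevertheless large enough to dominate $\mathrm{hcodim \,} H$ in all cases. I expect the definition of $\mathrm{hcodim \,} H$ in Definition~\ref{defnhorizontalcodimension} to be chosen precisely so that Griffiths transversality applied to motivic variations, combined with the polarization controlling the interaction of the pieces $\mathfrak{mt}(H)^{p,-p}$, forces the required bound; verifying this uniformly across possibly degenerate Mumford--Tate groups is the delicate point of the proof.
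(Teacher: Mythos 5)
There is a genuine gap, and it sits at the very first application of the period conjecture. You apply André's generalized Grothendieck period conjecture to $M$ as if it forced the comparison point to be \emph{generic} in the period torsor, concluding $\trdeg H = \dim(\mathrm{MT}(H)\cdot [H]) = \dim\mathcal{F}$. But $M$ is only an André motive over $\C$, i.e.\ over some finitely generated field $K$ whose transcendence degree is a priori unbounded; for such $M$ the conjecture only yields $\trdeg H \ge \dim\mathcal{F}^{\And} - \mathrm{trdeg}_\Q K$ (Corollary \ref{corgeneralized}), which is vacuous without control on $K$. Genericity of the period point is the content of the conjecture only for motives over $\overline\Q$. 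A sanity check: if your genericity claim held, every Hodge structure coming from geometry would have maximal transcendence degree, so the criterion would read $\dim\mathcal{F} \ge \mathrm{hcodim\,}H$, which is automatically true (since $F^0\mathfrak{g}\subseteq F^{-1}\mathfrak{g}$) — the argument would then prove a vastly stronger and unexpected statement, e.g.\ that abelian varieties over transcendental fields cannot exist.

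The missing idea is precisely the descent step that the paper isolates as Proposition \ref{propdescent}: one must show that $M$ descends to a field $K$ with $\mathrm{trdeg}_\Q K \le \dim_\C F^{-1}\mathfrak{g} - \dim_\C F^{0}\mathfrak{g}$. This is where Griffiths transversality actually enters — not through a Lie-algebraic comparison at the end, but by bounding the dimension of the image of the period map of a spread-out family $\mathbb{M}\to S$ over $\overline\Q$, using the algebraicity of that image \cite{BBT} and André's theorem of the fixed part to locate a fiber isomorphic to $M$ over a point of small transcendence degree. Feeding this bound into Corollary \ref{corgeneralized} gives $\trdeg H \ge \dim\mathcal{F} - (\dim F^{-1}\mathfrak{g} - \dim F^{0}\mathfrak{g}) = \mathrm{hcodim\,}H$ directly. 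Your final step is also off target: in Definition \ref{defnhorizontalcodimension} the horizontal codimension is computed from $\mathfrak{g}=\mathrm{Lie\,MT}(H)$ itself, not from $\mathfrak{gl}(H)$, so no comparison between $\mathfrak{mt}(H)$ and $\mathfrak{gl}(H)$ is needed or relevant.
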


\begin{rmk}
There exist abstract $\Q$-Hodge structures which satisfy the inequality (\ref{inequality}), see section \ref{sectionexamples}.
\end{rmk}

The transcendence degree of $H$ is defined to be the minimal transcendence degree of a field of definition of the Hodge filtration.
On the other hand, the horizontal codimension $\mathrm{hcodim \,} H$ measures the constraint of Griffiths transversality on the classifying space of $H$.
The precise definitions will be given in section \ref{sectioninvariants}.
We give two examples to illustrate the nature of these invariants and of Conjecture \ref{conj}:

\begin{ex}
\begin{enumerate}[(i)]
\item
If $H$ is a weight $1$ Hodge structure of type $(1,0), (0,1)$, then $\mathrm{hcodim \,} H=0$. Hence Conjecture \ref{conj} poses no restrictions in this case. This is consistent with the fact that all polarizable Hodge structures of type $(1,0), (0,1)$ come from abelian varieties.
Conversely, we have $\mathrm{hcodim \,} H=0$ if and only if the classifying space for the Hodge structure $H$ is a Shimura variety. Note that in this case Griffiths transversality is a trivial constraint.
\item 
On the other hand, $\trdeg H =0$ is equivalent to the Hodge filtration being defined over the algebraic closure $\overline \Q$. 
Conjecture \ref{conj} predicts that $H$ does not come from geometry unless $\mathrm{hcodim \,} H=0$, i.e. the classifying space for the Hodge structure $H$ is a Shimura variety.
\end{enumerate}
\end{ex}

We show that Conjecture \ref{conj} follows from the conjunction of two fundamental conjectures in Hodge theory and transcendence theory:
the conjecture that Hodge cycles are motivated and André's generalized Grothendieck period conjecture.

\begin{thm}[Theorem \ref{thmHSnotgeometric}]\label{thmimplication}
Assume that Hodge cycles are motivated (Conjecture \ref{conjmotivated}) and André's generalized Grothendieck period conjecture \ref{conjgeneralizedperiod} holds.
Then Conjecture \ref{conj} holds true. \end{thm}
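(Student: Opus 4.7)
I argue the contrapositive: assuming $H \cong \HH_\B(M)$ for some André motive $M/\C$, I show $\trdeg H \geq \mathrm{hcodim \,} H$. First spread $M$ out to a family of André motives $\mathcal{M}/S$ over a smooth irreducible variety $S$ over $\overline{\Q}$, with $M \cong \mathcal{M}_s$ for some $s \in S(\C)$; shrinking $S$, I may assume $\trdeg_{\overline{\Q}} \overline{\Q}(s) = \dim S$ and that the motivic Galois group of the generic fiber $\mathcal{M}_\eta$ equals $G_\mot(M)$. By the conjecture that Hodge cycles are motivated, this group coincides with the Mumford-Tate group $G := G_{\mathrm{MT}}(H)$, so the Mumford-Tate domain $D_{\mathrm{MT}}$ in which $F_H$ lies has compact dual $\check D_{\mathrm{MT}} \cong G/P$ with $P = \mathrm{Stab}(F_H)$ and $\dim D_{\mathrm{MT}} = \dim G - \dim P$.

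Let $K := \overline{\Q}(s) \subset \C$, so $\trdeg_{\overline{\Q}} K = \dim S$, and let $T := \mathrm{Iso}^\otimes(\omega_\B, \omega_\dR)$ denote the motivic period torsor of $\mathcal{M}_\eta$, a $G_K$-torsor of dimension $\dim G$. The comparison isomorphism provides a $\C$-point $c \in T(\C)$, and André's generalized period conjecture applied to $\mathcal{M}_\eta$ asserts that $c$ is Zariski-generic over $K$, i.e.\ $\trdeg_K K(c) = \dim G$. The Hodge filtration $F_H = c^{-1}(F_\dR)$ is the image of $c$ under the surjective orbit map $\phi : T \to \check D_{\mathrm{MT}}$, $c \mapsto c^{-1}(F_\dR)$, which is equivariant with $P$-torsor fibers; hence the Zariski genericity of $c$ transports to $\phi(c) = F_H$, giving $\trdeg_K K(F_H) = \dim G - \dim P = \dim D_{\mathrm{MT}}$.

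By additivity of transcendence degree, $\trdeg_{\overline{\Q}} K(F_H) = \dim S + \dim D_{\mathrm{MT}}$, and since $\trdeg_{\overline{\Q}(F_H)} K(F_H) \leq \trdeg_{\overline{\Q}} K = \dim S$ one deduces
\[ \trdeg H = \trdeg_{\overline{\Q}} \overline{\Q}(F_H) \geq (\dim S + \dim D_{\mathrm{MT}}) - \dim S = \dim D_{\mathrm{MT}}. \]
As $\mathrm{hcodim \,} H$ is the codimension of a subspace of $T_{F_H} D_{\mathrm{MT}}$, one has $\mathrm{hcodim \,} H \leq \dim D_{\mathrm{MT}} \leq \trdeg H$, which contradicts the hypothesis $\trdeg H < \mathrm{hcodim \,} H$.

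The principal obstacle is the second paragraph, namely transporting the Zariski genericity of the period point supplied by the generalized period conjecture along the orbit map $\phi$ to obtain genericity of $F_H$ in $\check D_{\mathrm{MT}}$. This requires correctly identifying, in the André framework, the motivic torsor $T$ with $\mathrm{Iso}^\otimes(\omega_\B, \omega_\dR)$, recognizing $F_\dR$ as a $K$-rational point of the de Rham flag variety, and verifying that $\phi$ is a $P$-torsor; these structural facts rest on the compatibility of the Betti and de Rham fiber functors under the motivic Galois action. A more routine but essential prerequisite is arranging the spreading-out of $M$ so the generic-fiber motivic Galois group is $G_\mot(M)$, where the hypothesis that Hodge cycles are motivated enters decisively to match motivic Galois and Mumford-Tate groups throughout the family.
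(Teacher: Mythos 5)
Your argument has a decisive gap in the second paragraph: you assert that André's generalized period conjecture, applied over $K=\overline\Q(s)$, says that the comparison point $c$ is Zariski-generic over $K$, i.e.\ $\mathrm{trdeg}_K K(c)=\dim G$. That is not what the conjecture states. It states $\mathrm{trdeg}_K K(c)\ \ge\ \dim G-\mathrm{trdeg}_\Q K$, so over your field $K$ the bound degrades by $\mathrm{trdeg}_\Q K=\dim S$; full genericity is only predicted when $K=\overline\Q$ (that is the content of Theorem \ref{Theorem}, which applies only to motives over $\overline\Q$). With the correct statement, pushing along the orbit map $\phi$ yields only $\trdeg H\ \ge\ \dim\mathcal F-\dim S$, and your conclusion $\trdeg H\ge\dim\mathcal D_{\mathrm{MT}}=\dim\mathcal F$ evaporates. (Had your claim been right, every Hodge structure coming from geometry would have maximal transcendence degree, which is far too strong.)

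Once the deficit $\dim S$ reappears, you still need an upper bound on the transcendence degree of a field of definition of $M$ by $\dim_\C F^{-1}\mathfrak g-\dim_\C F^{0}\mathfrak g$, and this is precisely the step your proof is missing: it is the content of Proposition \ref{propdescent}, and it is where Griffiths transversality enters --- indeed your argument never invokes Griffiths transversality, even though it is the sole source of the non-trivial inequality (cf.\ the remark after Theorem \ref{thmHSnotgeometric}). Note also that one cannot simply take $K=\overline\Q(s)$ with $\mathrm{trdeg}_\Q K=\dim S$, since $\dim S$ is not controlled by invariants of $H$; the paper instead bounds the dimension of the image of the period map (algebraic and defined over $\overline\Q$ up to quasi-finite maps by the theorem of Bakker--Brunebarbe--Tsimerman, and of dimension at most $\dim F^{-1}\mathfrak g/F^{0}\mathfrak g$ by Griffiths transversality), and then replaces $s$ by another point $x$ in the same fiber of $S\to B$ defined over a small field, using André's theorem of the fixed part to guarantee $\mathbb M_x\cong M$. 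Your first paragraph (spreading out, matching the motivic Galois and Mumford--Tate groups via Conjecture \ref{conjmotivated}) and the torsor mechanism for transporting transcendence along $\Phi:\Omega^{\And}_M\to\mathcal F^{\And}$ are in line with the paper, but without the two corrections above the proof does not go through.
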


Unfortunately, we are not able to give a single example of a polarizable Hodge structure for which one can unconditionally prove that it does not come from geometry.
\subsection{Idea of proof}
Let $H$ be a Hodge structure which is the Hodge realization of some André motive $M$ over $\C$.
We have to show that
$\trdeg H \ge \mathrm{hcodim \,} H$.
The proof of Theorem \ref{thmimplication} proceeds in two steps:

\begin{enumerate}[(i)]
\item Let $K\subset \C$ be a finitely generated field of definition for $M$.
Firstly, using André's generalized period conjecture one shows a lower bound for $\trdeg H$ depending on the transcendence degree of $K$ and data associated with the Hodge structure $H$ (cf. Corollary \ref{corgeneralized}). The fact that André's conjecture implies such a bound can be found in \cite[23.4.4]{AndreBook} (in the case of abelian varieties).
\item 
Secondly, we produce an upper bound for the transcendence degree of the field $K$ in terms of invariants attached to $H$ (cf. Proposition \ref{propdescent}).
This is done by spreading out $M$ to a family of motives $\mathbb{M} \to S$ defined over $\overline \Q$.
We look at the period map $$\varphi: S_\C^{\an} \to \mathcal{D} / \Gamma $$ corresponding to the associated variation of Hodge structure.
Assuming that Hodge cycles are motivated, Griffiths transversality implies that the dimension of its image is controlled by $\mathrm{hcodim \,} H$. 
This image is an algebraic variety by \cite{BBT}, and can be defined over $\overline \Q$ up to a quasi-finite map.
Using these properties, one finds a point  $x \in S(K')$ such that the motive $\mathbb{M}_x$ has Hodge realization $H$, and 
is defined over a field $K'$ whose transcendence degree is bounded by invariants attached to the Hodge structure $H$. 

\end{enumerate}
Together, the bounds in $(i)$ and $(ii)$ create a tension between the invariants of the Hodge structure $H$, which forces the inequality $\trdeg H \ge \mathrm{hcodim \,} H$ (cf. Theorem \ref{thmHSnotgeometric}).

\subsection{Analogy with the Fontaine-Mazur conjecture}
While the condition \begin{equation}\label{ge}
  \trdeg H \ge \mathrm{hcodim \,} H  
\end{equation} should be a necessary condition for a Hodge structure to come from geometry, we cannot expect it to be sufficient.
Namely, it allows way more Hodge structures than we can expect to come from geometry, see the discussion in section \ref{distribution}.
In addition, the category of Hodge structures coming from geometry forms a Tannakian subcategory of the Tannakian category $\Q-\mathrm{HS}$ of Hodge structures.
However, it seems that condition (\ref{ge}) is not stable under direct sums, direct summands, or tensor products.

We want to mention the following analogy with Galois representations.
Let $V$ be a finite dimensional $\Q_p$-vector space and $\rho: \Gal(\overline K/K) \to \mathrm{GL}(V)$ a continuous representation of the absolute Galois group of a number field $K$.
Then there are strong necessary conditions for $V$ to come from the étale cohomology $\HH^{n}_{\mathrm{\acute{e}t}}(X_{\overline K}, \Q_p)$ of a smooth projective variety over $K$:
the representation $V$ has to be \emph{de Rham} at all places $\mathfrak{p}$ of $K$ lying over $p$, and unramified at almost all other places.
The \emph{Fontaine-Mazur conjecture} \cite[Conjecture 1]{FontaineMazur} asserts that these conditions should in a certain sense also be sufficient: an irreducible Galois representation $\rho$ which satisfies these properties should be a Tate twist of a subquotient of some $\HH^{n}_{\mathrm{\acute{e}t}}(X_{\overline K}, \Q_p)$.

Of course, it would be very interesting to formulate sufficient conditions for an abstract Hodge structure $H$ to come from geometry, giving a conjectural description of the category of Hodge structures coming from geometry and a Hodge theoretic analog of the Fontaine-Mazur conjecture.

\subsection*{Acknowledgements}
We thank Mingmin Shen and Charles Vial for interesting discussions in Amsterdam in March 2023 and Bruno Klingler for his comments and remarks on a previous version of this paper.    

\section{Hodge structures and André motives}
\subsection{Invariants of Hodge structures}\label{sectioninvariants}

Given a finite dimensional $\Q$-vector space $H$, defining a pure Hodge structure of weight $n$ on $H$ is equivalent to giving a real algebraic representation $\rho: \mathbb{S} \to \mathrm{GL}(H_\R)$ of the Deligne torus $\mathbb{S} := \mathrm{Res}_{\C/\R} \mathbb{G}_m$ whose restriction to $\mathbb{G}_{m, \R} \subset \mathbb{S}$ is given by $t \mapsto t^{-n}$.

\begin{defn}
The \emph{Mumford-Tate group} of the Hodge structure $H$ is the smallest algebraic $\Q$-subgroup $G$ of $\mathrm{GL}(H)$ such that $G_\R$ contains the image of $\rho$. 
\end{defn}
\begin{rmk}
\begin{enumerate}[(i)]
\item
One can see from the definition that the Mumford-Tate group is always connected.
If the Hodge structure $H$ is polarizable, the Mumford-Tate group $G$ is a reductive group. 
\item 
The category $\Q-\mathrm{HS}$ of finite direct sums of pure $\Q$-Hodge structures forms a $\Q$-linear Tannakian category with fiber functor the forgetful functor to $\Q$-vector spaces.
The Mumford-Tate group $G$ of a Hodge structure $H$ can also be defined as the Tannaka group of the smallest Tannakian subcategory $\langle H \rangle^{\otimes}$ of $\Q-\mathrm{HS}$ containing $H$.
\end{enumerate}
\end{rmk}
Composing $\rho: \mathbb{S} \to G_\R$ with
$$\mathbb{G}_{m, \C} \to \mathbb{S}_\C = \mathbb{G}_{m, \C} \times \mathbb{G}_{m, \C} , \,\,\, t \mapsto (t,1) $$
gives a cocharacter $\mu: \mathbb{G}_{m,\C} \to G_\C$, called the \emph{Hodge cocharacter} of $H$.
The associated filtration is the \emph{Hodge filtration} $F^{\bullet}$ on $H \otimes_\Q \C$ defined by $$F^{i}:= \bigoplus_{p \ge i} H^{p,q}. $$
For a pure Hodge structure of weight $n$, one can recover the Hodge decomposition from the Hodge filtration by the formula $$H^{p,q} = F^p \cap \overline{F^q}. $$

Denote by $P_{\mu} \subset G_{\C}$ the parabolic subgroup attached to $\mu$ and define the flag variety $$\mathcal{F}_\C := G_{\C} / P_{\mu}. $$
By a classical argument (cf. \cite[Lemma 12.1]{MilneShimura}), the $G(\C)$-conjugacy class of $\mu$ is defined over $\overline \Q$, and therefore
$\mathcal{F}_\C$ has a model $\mathcal{F}$ over $\overline \Q$.

\begin{defn}\label{defnflagvariety}
We call the variety $\mathcal{F}$ the \emph{flag variety attached to the Hodge structure} $H$.
\end{defn}

In this section we define two invariants of a $\Q$-Hodge structure, the transcendence degree and the horizontal codimension.
The first notion depends crucially on the $\Q$-structure:

\begin{defn}\label{trdeg}
\begin{enumerate}[(i)]
\item
Let $H$ be a $\Q$-Hodge structure. We say that the Hodge filtration $F^{\bullet} \subset H\otimes_\Q \C$ can be defined over a subfield $K \subset \C$ if there is a filtration $F_K^{\bullet} \subset H \otimes_\Q K$ such that $F_K^{\bullet} \otimes_K \C = F^{\bullet}$.
\item 
The \emph{transcendence degree} $\mathrm{trdeg \,} H$ of the Hodge structure $H$ is the smallest integer $n$ such that the Hodge filtration can be defined over a subfield $K \subset \C$ with $\mathrm{trdeg}_{\Q} K = n$.
\end{enumerate}
\end{defn}

There is an obvious upper bound for the transcendence degree of a Hodge structure $H$ in terms of the associated flag variety.
Namely, the Hodge filtration $F^{\bullet}$ can be viewed as a complex point $ F^{\bullet} \in \mathcal{F}(\C)$, and therefore $\mathrm{trdeg \,} H \le \dim \mathcal{F}$ as $\mathcal{F}$ is defined over $\overline \Q$. If $\overline{\{F^{\bullet}\}}^{\overline \Q-\mathrm{Zar}} \subseteq \mathcal{F}$ denotes the smallest closed $\overline \Q$-subvariety of $\mathcal{F}$ containing $F^{\bullet}$, then $$\mathrm{trdeg \,} H = \dim \overline{\{F^{\bullet}\}}^{\overline \Q-\mathrm{Zar}}.$$

\begin{defn}\label{maxtrdeg}
We say a $\Q$-Hodge structure $H$ is \emph{of maximal transcendence degree} if $\mathrm{trdeg \,} H = \dim \mathcal{F}$.
Since $\mathcal{F}$ is connected, this is equivalent to $$\overline{\{F^{\bullet}\}}^{\overline \Q-\mathrm{Zar}} = \mathcal{F}.$$
\end{defn}

The second invariant of $H$ is a more geometric invariant.
By composing $\rho$ with the adjoint representation of the Mumford-Tate group $G$, its Lie algebra
$\mathfrak{g} = \mathrm{Lie \,} G$ carries a $\Q$-Hodge structure of weight zero
$$\mathfrak{g}_\C = \oplus_{i} \mathfrak{g}^{i,-i}.$$
We denote by $F^{i}\mathfrak{g}$ the associated Hodge filtration.

The Lie algebra of the parabolic subgroup $P_\mu$ is identified with $F^{0}\mathfrak{g}$.
Therefore the tangent space of the flag variety $\mathcal{F}_\C$
can be identified with $\mathfrak{g}_\C / F^{0}\mathfrak{g}$, and
we have $$\dim \mathcal{F} = \dim_{\C} \mathfrak{g}_\C - \dim_\C F^{0}\mathfrak{g}.$$

\begin{defn}\label{defnhorizontalcodimension}
The \emph{horizontal codimension} of the Hodge structure $H$ is defined to be $$\mathrm{hcodim \,} H := \dim_{\C} \mathfrak{g}_\C - \dim_\C F^{-1}\mathfrak{g}.$$
\end{defn}
\begin{rmk}
\begin{enumerate}[(i)]
\item Identifying the tangent space of the flag variety $\mathcal{F}_\C$
with $\mathfrak{g}_\C / F^{0}\mathfrak{g}$,
the subspace $T_h\mathcal{F}_\C := F^{-1}\mathfrak{g}/F^{0}\mathfrak{g} \subset \mathfrak{g}_\C / F^{0}\mathfrak{g}$ is often called the \emph{horizontal tangent space}.
With this terminology, $\mathrm{hcodim \,} H$ is the codimension of the horizontal tangent space in the full tangent space.
\item This is different from the notion of \emph{horizontal Hodge codimension} in \cite[Definition 5.8]{BKU}. Since this notion has been mostly replaced by the \emph{Hodge codimension} as defined in \cite[Definition 5.1]{BKU}, we hope this terminology does not cause any confusion.
\end{enumerate}
\end{rmk}

Note that $\mathrm{hcodim \,} H = 0$ if and only if the weights which occur in the adjoint representation for the Hodge cocharacter $\mu: \mathbb{G}_m \to G_\C$ are contained in $\{1,0,-1\}$. 

\subsection{André motives}\label{sectionAndre}

Fix a subfield $K\subseteq \C$. André defined a semi-simple $\Q$-linear Tannakian category $\Mot_K^{\And}$ of pure motives over $K$, relying on his notion of motivated cycles (cf. \cite[Definition 1]{AndreIHES}).
Its objects consist of triples $(X, p, n)$, where $X$ is a smooth projective algebraic variety over $K$, $p \in \mathrm{Cor}_{\mot}^{0}(X,X)$ is an idempotent motivated correspondence of degree $0$ and $n \in \Z$ is an integer.
Morphisms in $\Mot_K^{\And}$ are given by motivated correspondences \cite[4.2]{AndreIHES}.
We use the notation $\mathfrak{h}(X) := (X, \Delta_X, 0)$, where $\Delta_X \subset X \times X$ denotes the diagonal.
We refer to \cite{AndreIHES} for the precise definition and detailed discussion of these concepts.

The \emph{Betti realization} defines a $\Q$-linear fiber functor
$$\omega_\B: \Mot_K^{\And} \to \mathrm{Vect}_\Q.$$
This functor sends the motive $\mathfrak{h}(X)$ of a smooth projective variety $X$ to $\HH_\B^{*}(X, \Q)$. It depends crucially on the fixed embedding $K \hookrightarrow \C$. 
Taking the Hodge structure on Betti cohomology into account, one can refine this functor to an exact faithful tensor functor 
$$ \omega_\Hdg: \Mot_K^{\And} \to \Q-\mathrm{HS}$$
to the category of pure $\Q$-Hodge structures, called the \emph{Hodge realization}.

\begin{defn}
We say that an André motive $M$ is \emph{pure} (of weight $n$) if $\omega_\Hdg(M)$ is a pure Hodge structure of weight $n$.
\end{defn}

Similarly, the \emph{de Rham realization} defines a fiber functor
$$ \omega_\dR: \Mot_K^{\And} \to \mathrm{Vect}_K.$$
The Hodge filtration on de Rham cohomology defines a filtration on the fiber functor $\omega_\dR$.

\begin{defn}
The \emph{motivic Galois group} of a motive $M \in \Mot_K^{\And}$
 is the algebraic group $$G^{\And}(M) := \mathrm{Aut}^{\otimes}( {\omega_\B}|_{\langle M\rangle^{\otimes}})$$ of tensor automorphisms of the fiber functor $\omega_\B$ restricted to $\langle M\rangle^{\otimes}$. Here $\langle M\rangle^{\otimes}$ denotes the smallest Tannakian subcategory of $\Mot_K^{\And}$ containing $M$.
 
\end{defn}
\begin{rmk}
Since the category $\Mot_K^{\And}$ is semi-simple, the motivic Galois group $G^{\And}(M)$ of $M$ is a reductive group over $\Q$. It is conjectured, but not known to be connected. 
\end{rmk}

The following conjecture is a weaker form of the Hodge conjecture:

\begin{conj}[Hodge classes are motivated]\label{conjmotivated}
The functor 
$$ \omega_\Hdg: \Mot_\C^{\And} \to \Q-\mathrm{HS} $$
is fully faithful.
\end{conj}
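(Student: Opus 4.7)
The statement is the Hodge conjecture with ``algebraic cycle'' weakened to ``motivated cycle'', and is a genuinely open problem; my plan is therefore to outline the natural strategy rather than to claim a proof. First, the Tannakian formalism reduces full faithfulness of $\omega_\Hdg$ to a single assertion: a morphism of Hodge structures $f\colon\omega_\Hdg(M)\to\omega_\Hdg(N)$ corresponds, via internal Hom, to a Hodge class in the realization of the André motive $\underline{\Hom}(M,N)$, and the lift of $f$ to a morphism of motives is precisely a motivated cycle. Hence the conjecture reduces to showing that every class in $\HH^{2p}_\B(X,\Q(p))$ of type $(p,p)$, for every smooth projective variety $X$ over $\C$, is motivated.

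The natural approach, modelled on the case of abelian varieties (Deligne, André), is a two-step procedure. Step $(i)$: show that every Hodge class is \emph{absolutely Hodge}, meaning that it remains a Hodge class after applying any $\sigma\in\mathrm{Aut}(\C)$, compared via the de Rham-Betti isomorphism. Step $(ii)$: promote absolutely Hodge classes to motivated cycles by exhibiting them as polynomials in Chern classes of ample line bundles and inverses of Lefschetz operators --- this is exactly André's extension of Deligne's argument, and it applies whenever these operations span the algebra of absolutely Hodge classes under consideration.

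The main obstacle is step $(i)$. Deligne's principle \emph{Hodge implies absolutely Hodge} is known only for abelian varieties and a few scattered cases (K3 surfaces, certain Shimura-type situations), and its proof rests crucially on the rigidity of polarizable weight-one Hodge structures and on the interpretation of abelian varieties via Shimura data. For a general smooth projective variety no such rigidity is available, and no adequate substitute is presently known; a promising but unclear direction is to combine variational Hodge theory with the algebraicity results of \cite{BBT} to spread absolute Hodgeness across Hodge loci, but this still seems to demand ideas well beyond the current state of the art. A genuine attack would therefore have to supply either a new proof of absolute Hodgeness in arbitrary weight, or a direct construction of motivated cycles from transcendental input, which is why the assertion is treated as a conjecture throughout the paper.
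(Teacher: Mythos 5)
This statement is stated in the paper as a conjecture (a weakening of the Hodge conjecture) and is given no proof there; your recognition that it is a genuinely open problem is therefore exactly right, and there is nothing in the paper to compare your outline against. Your Tannakian reduction --- full faithfulness of $\omega_\Hdg$ is equivalent to every Hodge class in every $\omega_\Hdg(\underline{\Hom}(M,N))$ being motivated, i.e.\ to the equality of the Mumford--Tate group with the motivic Galois group for all $M$ --- matches the equivalent formulation the paper itself records in the remark following the conjecture, and your summary of the Deligne--Andr\'e two-step strategy and its limitation to abelian (and a few other) motives is an accurate account of the state of the art.
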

\begin{rmk}
\begin{enumerate}[(i)]
\item Since the category $\Q-\mathrm{HS}$ is semi-simple, Conjecture \ref{conjmotivated} also implies that the image of $\omega_\Hdg$ is stable under subobjects.
\item 
Let $M$ be an André motive over $\C$ and $G$ denote the Mumford-Tate group of $\omega_\Hdg(M)$. The tensor functor $\omega_\Hdg$ induces an inclusion of Tannaka groups $G \subseteq G^{\And}(M)$.
Conjecture \ref{conjmotivated} is equivalent to the assertion that this inclusion is an equality for all $M$.
\end{enumerate}
\end{rmk}
For a smooth projective variety $X$ over $K$, integration of differential forms over topological cycles on $X_\C^{\an}$ defines an isomorphism $$c_X:  \HH_\dR^*(X/K) \otimes_K \C \cong \HH^*_\B(X, \Q) \otimes_\Q \C.$$
More generally, for any André motive $M$ over $K$ there is a natural complex comparison isomorphism
$$c_M:   \omega_\dR(M) \otimes_K \C \cong \omega_\B(M) \otimes_\Q \C$$
which recovers the previous isomorphism for motives of the form $M= \mathfrak{h}(X)$. Moreover, $c_M$ is functorial and compatible with tensor products, and thus defines an isomorphism of tensor functors 
$$c: \omega_\dR \otimes_K \C  \cong \omega_\B \otimes_\Q \C$$ on $\Mot^{\And}_K$.
Under this complex comparison, the filtration on the fiber functor $\omega_\dR$ is identified with the Hodge filtration of the Hodge realization. 

\begin{defn}\label{defntorsor}
Let $M$ be an André motive over $K$. 
The \emph{motivated period torsor} of $M$ is defined to be the torsor of tensor isomorphisms 
$$\Omega^{\And}_M := \mathrm{Isom}^{\otimes}(\omega_\dR|_{\langle M\rangle^{\otimes}}, \omega_\B \otimes_\Q K |_{\langle M\rangle^{\otimes}}). $$
It is naturally a torsor under $G^{\And}(M)_K:= G^{\And}(M)\otimes_\Q K$.
\end{defn}
The complex comparison isomorphism $c_M:   \omega_\dR(M) \otimes_K \C \cong \omega_\B(M) \otimes_\Q \C$ defines a complex point $c_M \in \Omega^{\And}_M(\C)$.
A fundamental conjecture of Grothendieck asserts that the point $c_M$ ought to be a generic point of this torsor:

\begin{conj}[Grothendieck period conjecture (motivated version)]\label{conjgrothendieckperiod}
Let $M$ be an André motive over $\overline \Q$. Then
$$\overline{\{c_M\}}^{\overline \Q-\mathrm{Zar}} =  \Omega^{\And}_M.$$
\end{conj}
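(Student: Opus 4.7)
The containment $\overline{\{c_M\}}^{\overline \Q-\mathrm{Zar}} \subseteq \Omega^{\And}_M$ is essentially formal: the motivated period torsor is the closed subvariety of $\mathrm{Isom}(\omega_\dR(M), \omega_\B(M)\otimes_\Q \overline\Q)$ cut out by the motivated tensors living in the tensor algebra generated by $M$, and by construction the complex comparison $c_M$ respects precisely these relations. Since $\Omega^{\And}_M$ is defined over $\overline \Q$, the $\overline \Q$-Zariski closure of $c_M$ already lies inside it. The hard direction is the opposite inclusion, which by a dimension count is equivalent to the transcendence statement that the subfield of $\C$ generated over $\overline \Q$ by the entries of $c_M$ has transcendence degree equal to $\dim G^{\And}(M)$; equivalently, that every $\overline \Q$-algebraic relation among the periods of $M$ is accounted for by a motivated cycle.

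My plan would be a proof by contradiction. Suppose $T := \overline{\{c_M\}}^{\overline \Q-\mathrm{Zar}}$ were a proper subvariety of $\Omega^{\And}_M$. Since $\Omega^{\And}_M$ is a torsor under the reductive group $G^{\And}(M)_{\overline\Q}$ and $T$ is a $\overline\Q$-subvariety, by a Chevalley-type argument there would be a nonzero regular function $f$ on $\Omega^{\And}_M$ vanishing on $T$, corresponding under Tannakian duality to a tensor $\xi \in \omega_\dR(N)$ for some $N \in \langle M \rangle^\otimes$ that is \emph{not} $G^{\And}(M)$-invariant, but whose image under $c_M$ is nevertheless algebraic over $\overline\Q$ inside $\omega_\B(N) \otimes_\Q \C$. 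The aim would then be to upgrade such a $\xi$ to an honest motivated cycle on $N$, thereby producing a new tensor that every element of $G^{\And}(M)$ must fix, in contradiction with its definition as the Tannaka group of $\langle M\rangle^\otimes$.

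The main obstacle lies precisely at this last step: extracting a motivated (or even absolute Hodge) cycle from an algebraic relation among periods is a transcendence problem for which no general method is known. Partial approaches include Siegel--Shidlovsky theory for motives arising from $E$-functions, Chudnovsky's results on periods of elliptic curves with complex multiplication, and André's own differential-Galois-theoretic treatment of Picard--Fuchs equations (see for instance \cite{AndreBook}), each of which handles only very restricted classes of motives. The absence of any transcendence input of sufficient generality is why the conjecture is wide open, and I do not actually expect to produce a proof; the discussion above should rather be read as describing the formal shape that any such proof would have to take, and as identifying the single step, the construction of motivated cycles out of algebraic period relations, where a genuinely new idea is required.
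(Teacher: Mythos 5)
This statement is a \emph{conjecture}, not a theorem of the paper: it is the motivated form of Grothendieck's period conjecture, which is wide open, and the paper's main results (Theorem \ref{thmHSnotgeometric}) are explicitly conditional on it and on André's generalization \ref{conjgeneralizedperiod}. The paper offers no proof, so there is nothing to compare your argument against. Your proposal, to its credit, does not claim to close the gap: you correctly identify that the inclusion $\overline{\{c_M\}}^{\overline \Q-\mathrm{Zar}} \subseteq \Omega^{\And}_M$ is formal (the torsor is a closed $\overline\Q$-subscheme of the relevant Isom-scheme containing $c_M$), that the content is the reverse inclusion, and that the genuinely missing ingredient is a transcendence mechanism converting an algebraic relation among periods into a motivated cycle. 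That is an accurate description of why the conjecture is open, and your list of known partial inputs (Siegel--Shidlovsky, Chudnovsky, André's work on $G$-functions) is the right one.

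Two small corrections to your framing. First, the reformulation of equality as ``$\trdeg_{\overline\Q}$ of the period field equals $\dim G^{\And}(M)$'' only captures the statement that $\overline{\{c_M\}}^{\overline \Q-\mathrm{Zar}}$ is a connected component of $\Omega^{\And}_M$; the conjecture as stated also asserts that the torsor is connected, a point the paper makes explicitly in the remark following Conjecture \ref{conjgeneralizedperiod} when comparing the two period conjectures over $\overline\Q$. Second, in your contradiction scheme the function $f$ vanishing on a proper closed subvariety of the torsor does not directly correspond to a single tensor $\xi \in \omega_\dR(N)$; one needs the Chevalley-type step of realizing the subvariety inside a finite-dimensional subrepresentation of the coordinate ring and then extracting a line or tensor stabilized by the relevant group. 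These are cosmetic issues next to the main point: the step ``upgrade $\xi$ to a motivated cycle'' is the entire conjecture, and no proof should be claimed here.
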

It is crucial for the Grothendieck period conjecture that we work with a motive over $\overline \Q$.
To allow more general base fields, André formulated the following generalization:

\begin{conj}[André's generalized Grothendieck period conjecture, (\cite{AndreBook}, 23.4.1)] \label{conjgeneralizedperiod}
Let $M$ be an André motive over a subfield $K \subset \C$ of finite transcendence degree over $\Q$. 
Denote by $\overline{\{c_M\}}^{K-\mathrm{Zar}} \subseteq \Omega^{\And}_M$ the $K$-Zariski closure of $c_M \in \Omega^{\And}_M(\C)$.
Then
$$\dim_K \overline{\{c_M\}}^{K-\mathrm{Zar}} \ge \dim_K \Omega^{\And}_M - \mathrm{trdeg}_\Q K$$
\end{conj}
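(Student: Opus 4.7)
This is a famous open conjecture of André generalizing Grothendieck's original period conjecture (Conjecture \ref{conjgrothendieckperiod}), and I do not know of any unconditional route to it. My plan is therefore programmatic: to reduce the generalized statement to the classical period conjecture over $\overline \Q$ via a spreading-out argument, and then to bridge the gap using an Ax-Schanuel-type functional transcendence statement for period maps.

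First I would spread out the motive. Choose a finite-type $\overline\Q$-subalgebra $R \subset K$ and a smooth irreducible $\overline\Q$-scheme $S = \Spec R$ such that $M$ extends to a family of André motives $\mathbb{M} \to S$ whose generic fiber recovers $M$. Relative Betti and de Rham cohomology then form a polarizable variation of Hodge structure; after shrinking $S$, the motivic Galois group $G^{\And}(M)$ is the generic stalk of a reductive local system on $S$, and the period torsor $\Omega^{\And}_M$ extends to a torsor scheme $\Omega \to S$ whose complex comparison yields an analytic section $s: S(\C)^{\an} \to \Omega^{\an}$. Let $Y \subseteq \Omega$ denote the $\overline\Q$-Zariski closure of the image $s(S(\C)^{\an})$. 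The embedding $K \hookrightarrow \C$ corresponds to a $\C$-point of $S$ whose image is the generic point, and the standard relation between Zariski closures in a total space and in the fiber over a generic point gives
$$\dim_{\overline\Q} Y \;=\; \dim_K \overline{\{c_M\}}^{K\text{-}\mathrm{Zar}} \;+\; \dim S \;=\; \dim_K \overline{\{c_M\}}^{K\text{-}\mathrm{Zar}} \;+\; \mathrm{trdeg}_\Q K.$$
The conjecture is thereby reduced to the purely geometric lower bound $\dim_{\overline\Q} Y \;\ge\; \dim_K \Omega^{\And}_M = \dim G^{\And}(M)$.

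Second, I would try to establish this bound by combining two deep ingredients. The first is an Ax-Schanuel statement for the period map associated with the variation of Hodge structure on $S$: the Bakker-Tsimerman theorem forces the transcendental image $s(S(\C)^{\an})$ to be Zariski-dense in its ``weakly special'' envelope, which controls how dimensions of Zariski closures can propagate from a single fiber to the whole family. The second is the classical Grothendieck period conjecture applied at an arithmetic point $s_0 \in S(\overline\Q)$ chosen so that $G^{\And}(\mathbb{M}_{s_0})$ agrees with the generic motivic Galois group; such points are plentiful thanks to the algebraicity of Hodge loci from \cite{BBT}. At $s_0$, Grothendieck's conjecture would give $\dim_{\overline\Q} \overline{\{c_{\mathbb{M}_{s_0}}\}}^{\overline\Q\text{-}\mathrm{Zar}} = \dim G^{\And}(M)$, and the Ax-Schanuel input would then propagate this equality to force the required lower bound on $\dim Y$.

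The main obstacle is precisely the second input. The classical Grothendieck period conjecture is itself one of the central open problems in transcendence theory, known only in scattered cases (the Lindemann-Weierstrass theorem, Chudnovsky's theorem for CM elliptic curves, and a handful of variants). So this plan at best reduces André's generalized conjecture to the original one together with a functional transcendence statement of Ax-Schanuel type; it does not produce an unconditional proof. Any genuine progress would have to either attack the classical conjecture directly or exploit very special structure of $M$ (CM type, hypergeometric origin, abelian type) that brings it within reach of existing transcendence machinery.
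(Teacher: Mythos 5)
The statement you were asked about is a \emph{conjecture}, not a theorem: it is André's generalized Grothendieck period conjecture, which the paper quotes from \cite{AndreBook} (23.4.1) and uses only as a hypothesis (in Theorem \ref{theoremgeneralized}, Corollary \ref{corgeneralized} and Theorem \ref{thmHSnotgeometric}). The paper contains no proof of it, and none is expected; the only content the paper adds is the remark identifying $\mathrm{trdeg}_K K(\textnormal{periods of } M)$ with $\dim_K \overline{\{c_M\}}^{K-\mathrm{Zar}}$ and $\dim G^{\And}(M)$ with $\dim_K \Omega^{\And}_M$ via the torsor structure. You correctly recognized its open status, so there is nothing to compare against.

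Two cautions about your proposed reduction, in case you pursue it. First, your displayed dimension identity is stated for $Y$ defined as the $\overline\Q$-closure of the \emph{entire} image $s(S(\C)^{\an})$ of the period section; the standard relation $\dim_{\overline\Q} \overline{\{c_M\}}^{\overline\Q-\mathrm{Zar}} = \dim_K \overline{\{c_M\}}^{K-\mathrm{Zar}} + \mathrm{trdeg}_\Q K$ holds for the closure of the \emph{single} comparison point $c_M$, and the former closure may be strictly larger, so a lower bound on $\dim Y$ does not directly bound the quantity you need. Second, the generalized conjecture is genuinely stronger than the classical one over $\overline\Q$: it encodes new transcendence statements (e.g.\ consequences of Schanuel type) that are not captured by functional transcendence. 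Ax--Schanuel for period maps (Bakker--Tsimerman) constrains intersections of the image of the period map with algebraic subvarieties of the Hodge variety, but it says nothing about the $\overline\Q$-structure on the period torsor, which is where the arithmetic content lives; so the proposed propagation from a single arithmetic point $s_0$ to the generic fiber is not a known implication and would itself be a substantial theorem. As a programmatic sketch acknowledging its own conditionality, your answer is reasonable, but the bridge between the two ingredients is the missing idea, not a routine step.
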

The original formulation in (\cite{AndreBook}, 23.4.1) reads
$$\mathrm{trdeg}_K K(\textnormal{periods of } M) \ge \dim G^{\And}(M) - \mathrm{trdeg}_\Q K.$$
The left hand side can be identified with the dimension of  $\overline{\{c_M\}}^{K-\mathrm{Zar}}$, and since $\Omega^{\And}_M$ is a torsor under $G^{\And}(M)_K$, the right hand sides agree.

\begin{rmk}
In the special case $K = \overline \Q$, the generalized period conjecture predicts that $$\dim_{\overline \Q} \overline{\{c_M\}}^{\overline \Q-\mathrm{Zar}} = \dim_{\overline \Q} \Omega^{\And}_M.$$
That is, $\overline{\{c_M\}}^{\overline \Q-\mathrm{Zar}}$ is a connected component of $\Omega^{\And}_M$. 
Thus Conjecture \ref{conjgeneralizedperiod} for $K = \overline \Q$ does not quite imply the Grothendieck period conjecture
\ref{conjgrothendieckperiod}, as the latter also contains the statement that the torsor $\Omega^{\And}_M$ is connected. 
\end{rmk}

\section{Transcendence of the Hodge filtration}

\subsection{Hodge structures of varieties over number fields}

In this section we point out the consequences of the Grothendieck period conjecture \ref{conjgrothendieckperiod} for the Hodge structures of varieties over number fields.
Theorem \ref{Theorem} below is certainly known to the experts, in the special case where $M$ is the motive of an abelian variety it can be found in \cite[Proposition 23.2.4.1]{AndreBook}, and in the general case it is formulated as a conjecture in \cite[VIII.A.8]{MTdomains}.

Let $M$ be an André motive over $\overline \Q$. Its Betti realization $H:= \omega_\Hdg(M)$ is a $\Q$-Hodge structure. For the invariants attached to $H$ we use the notation introduced in section \ref{sectioninvariants}. In particular, we denote by $G$ the Mumford-Tate group of $H$, by $\mu: \mathbb{G}_{m, \C} \to G_\C$ the Hodge cocharacter, and define the associated flag variety $\mathcal{F}$ as Definition \ref{defnflagvariety}. 

In addition, we need a second flag variety $\mathcal{F}^{\And}$ which is a homogeneous space for the motivic Galois group $G^{\And}(M)$.
Since $G\subseteq G^{\And}(M)$, we can view the Hodge cocharacter $\mu$ as a cocharacter $\mu: \mathbb{G}_{m, \C} \to G^{\And}(M)_{\C}$.
Denote by $P_{\mu}^{\And} \subseteq G^{\And}(M)_{\C}$ the associated parabolic subgroup.
We define the flag variety $$\mathcal{F}^{\And}_{\C} := G^{\And}(M)_{\C} / P_\mu^{\And}.$$
By the same argument as before, it has a model $\mathcal{F}^{\And}$ over $\overline \Q$.
Moreover, we have a closed immersion $\mathcal{F} \subseteq \mathcal{F}^{\And}$ defined over $\overline \Q$.

Recall that we denote by $ \overline{\{F^{\bullet}\}}^{\overline \Q-\mathrm{Zar}} \subseteq \mathcal{F}$ the $\overline \Q$-Zariski closure of the point $F^{\bullet} \in \mathcal{F}(\C)$ defined by the Hodge filtration $F^{\bullet} \subseteq H \otimes_\Q \C$.

\begin{thm}[cf. {\cite[Proposition 23.2.4.1]{AndreBook}}] \label{Theorem} 
Assume the Grothendieck period conjecture \ref{conjgrothendieckperiod} holds for the André motive $M$ over $\overline \Q$. Then $$ \overline{\{F^{\bullet}\}}^{\overline \Q-\mathrm{Zar}} = \mathcal{F}^{\And}.$$
\end{thm}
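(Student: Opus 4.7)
The plan is to exhibit a $\overline{\Q}$-morphism $\pi\colon \Omega^{\And}_M \to \mathcal{F}^{\And}$ sending $c_M$ to $F^{\bullet}$, and then push the equality of Conjecture \ref{conjgrothendieckperiod} forward along it.

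First I would construct $\pi$. The Hodge filtration on $\omega_{\dR}(M)$ is defined over $\overline{\Q}$, so its stabilizer is a parabolic subgroup $P_\dR \subseteq \mathrm{Aut}^{\otimes}(\omega_\dR)$ defined over $\overline{\Q}$. The motivated period torsor $\Omega^{\And}_M$ carries both a left $G^{\And}(M)_{\overline{\Q}}$-action (post-composition) and a right $\mathrm{Aut}^{\otimes}(\omega_\dR)$-action (pre-composition), both algebraic and defined over $\overline{\Q}$. Define $\pi$ as the quotient of $\Omega^{\And}_M$ by the right action of $P_\dR$. After any base change trivializing the torsor, this quotient becomes $G^{\And}(M)/P$ for a parabolic $P$ conjugate to $P_{\dR, \C}$ in $G^{\And}(M)(\C)$, and the complex comparison $c_M$ conjugates $P_{\dR, \C}$ precisely onto the stabilizer $P_\mu^{\And}$ of the Hodge filtration $F^{\bullet}$ by the compatibility of $c_M$ with Hodge filtrations. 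Since conjugacy classes of parabolics in a reductive group over an algebraically closed field are determined by type, the quotient $\Omega^{\And}_M/P_\dR$ is identified with $\mathcal{F}^{\And}$ over $\overline{\Q}$. In particular $\pi$ is surjective, and the same compatibility gives $\pi(c_M) = F^{\bullet}$.

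Next I would push Grothendieck forward. For any morphism $\pi\colon X \to Y$ of $\overline{\Q}$-varieties and any $\C$-point $x \in X(\C)$ one has
$$\overline{\pi\bigl(\overline{\{x\}}^{\overline{\Q}-\mathrm{Zar}}\bigr)}^{\overline{\Q}-\mathrm{Zar}} = \overline{\{\pi(x)\}}^{\overline{\Q}-\mathrm{Zar}},$$
as one sees by noting that $\pi^{-1}\bigl(\overline{\{\pi(x)\}}^{\overline{\Q}-\mathrm{Zar}}\bigr)$ is a $\overline{\Q}$-closed subset of $X$ containing $x$, hence containing $\overline{\{x\}}^{\overline{\Q}-\mathrm{Zar}}$, and conversely $\pi(x) \in \pi(\overline{\{x\}}^{\overline{\Q}-\mathrm{Zar}})$. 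Substituting $\Omega^{\And}_M$ for $\overline{\{c_M\}}^{\overline{\Q}-\mathrm{Zar}}$ via Conjecture \ref{conjgrothendieckperiod} and using the surjectivity of $\pi$, together with the fact that $\mathcal{F}^{\And}$ is already $\overline{\Q}$-closed, yields $\overline{\{F^{\bullet}\}}^{\overline{\Q}-\mathrm{Zar}} = \mathcal{F}^{\And}$.

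The main obstacle is the identification $\Omega^{\And}_M/P_\dR \cong \mathcal{F}^{\And}$ as $\overline{\Q}$-varieties: one must carefully compare the ``de Rham model'' of the flag variety, built from the parabolic stabilizing $F^\bullet_\dR$ inside $\mathrm{Aut}^{\otimes}(\omega_\dR)$, with the ``Betti model'' $\mathcal{F}^{\And}$ built from the Hodge cocharacter $\mu$ into $G^{\And}(M)_\C$, and verify that the comparison is defined over $\overline{\Q}$ even though the two underlying reductive groups only become isomorphic after base change via the torsor $\Omega^{\And}_M$. Once this Tannakian bookkeeping is arranged, the theorem reduces to the Grothendieck period conjecture combined with the standard fact that $\overline{\Q}$-Zariski closures are compatible with images under $\overline{\Q}$-morphisms.
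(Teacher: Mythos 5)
Your proof is correct and is essentially the paper's own argument: the paper defines the same map directly as $\Phi\colon \Omega^{\And}_M \to \mathcal{F}^{\And}$, $\varphi \mapsto \varphi(F^{\bullet}_{\dR})$, which is exactly your quotient by $P_{\dR}$ (two tensor isomorphisms have the same image iff they differ on the right by an element stabilizing $F^{\bullet}_{\dR}$), notes it is a surjective $G^{\And}(M)_{\overline\Q}$-equivariant $\overline\Q$-morphism with $\Phi(c_M)=F^{\bullet}$, and pushes the Grothendieck period conjecture forward just as you do.
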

\begin{proof}
The Hodge filtration $F^{\bullet}_{\dR} \subset \omega_{\dR}(M)$ on the de Rham realization of $M$ is defined over $\overline \Q$.
Denote by $\Omega_M^{\And}$ the motivated period torsor of $M$ (cf. Definition \ref{defntorsor}), a torsor under $G^{\And}(M)_{\overline \Q}$.
An element $\varphi \in \Omega_M^{\And}(\overline \Q)$ can be used to define a filtration $\varphi( F^{\bullet}_\dR)$ on $\omega_{\B}(M) \otimes_{\Q} \overline \Q$. Since $\varphi$ is an isomorphism of tensor functors, this defines a filtration on the fiber functor $\omega_\B \otimes _{\Q} \overline \Q$ restricted to $\langle M\rangle^{\otimes}$
and therefore a point of $\mathcal{F}^{\And}(\overline \Q)$. 
The morphism $$\Phi: \Omega_M^{\And} \to \mathcal{F}^{\And}, \,\, \varphi \mapsto \varphi( F^{\bullet}_\dR)$$
 is a surjective $G^{\And}(M)_{\overline \Q}$-equivariant morphism of varieties over $\overline \Q$.
The complex comparison isomorphism $$c_M: \omega_{\dR}(M) \otimes \C \cong \omega_\B(M)\otimes \C$$ maps $F^{\bullet}_{\dR}$ to the Hodge filtration $F^{\bullet}$ on $\omega_\B(M) \otimes_\Q \C$.
Consequently, if we view $c_M \in \Omega_M^{\And}(\C)$ as a complex point of the period torsor, then $\Phi(c_M) = F^{\bullet} \in \mathcal{F}^{\And}(\C)$.
The Grothendieck period conjecture predicts that $\overline{\{c_M\}}^{\overline \Q-\mathrm{Zar}} =  \Omega_M^{\And}$, and therefore $\overline{\{F^{\bullet}\}}^{\overline \Q-\mathrm{Zar}} = \mathcal{F}^{\And}$.
\end{proof}
\begin{rmk}
We want to point out a special case of Theorem \ref{Theorem} which is known unconditionally.
Assume that $M$ is an abelian motive, i.e. it lies in the Tannakian subcategory of $\Mot^{\And}_{\overline \Q}$ generated by motives of abelian varieties. 
Shiga-Wolfart and Cohen proved the following theorem:

\begin{thm}[{\cite[Main Theorem]{ShigaWolfart}, \cite{Cohen}}] \label{SWC}
Let $M$ be an abelian motive over $\overline \Q$.
If the Hodge filtration on $\omega_\B(M) \otimes_\Q \C$ is defined over $\overline \Q$, 
then the motive $M$ has \emph{complex multiplication}, i.e. the group $G^{\And}(M)$ is a torus.
\end{thm}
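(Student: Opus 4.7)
The strategy is to reduce the statement to the Shiga-Wolfart/Cohen theorem for abelian varieties, whose proof rests on W\"ustholz's analytic subgroup theorem. Since $M$ is an abelian motive, by definition $M \in \langle \mathfrak{h}(A)\rangle^{\otimes}$ for some abelian variety $A$ over $\overline{\Q}$. By Deligne's theorem that Hodge cycles on abelian varieties are absolutely Hodge, and Andr\'e's enhancement showing them to be motivated, the motivic Galois group $G^{\And}(M)$ coincides with the Mumford-Tate group $\mathrm{MT}(\omega_\Hdg(M))$ and appears as a quotient of $\mathrm{MT}(A)$. Since quotients of tori are tori, it suffices to show that $\mathrm{MT}(A)$ is a torus.

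First I would transfer the rationality hypothesis from $M$ to $A$. The assumption that the Hodge filtration on $\omega_\Hdg(M)$ is defined over $\overline{\Q}$ selects a $\overline{\Q}$-point in the flag variety $\mathcal{F}_M = \mathrm{MT}(M)_\C / P_\mu$ attached to $M$. Through the surjection $\mathrm{MT}(A) \twoheadrightarrow \mathrm{MT}(M)$, and after choosing $A$ carefully (for example minimally so that this surjection has central kernel), one descends the rationality to the flag variety $\mathcal{F}_A$ of $A$. Concretely, this means that the normalized period point $\tau \in \mathfrak{H}_g$ of $A$ is algebraic.

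At this stage the Shiga-Wolfart/Cohen theorem for abelian varieties applies: an abelian variety over $\overline{\Q}$ whose period point is algebraic is isogenous to a product of abelian varieties with complex multiplication. Their argument passes through W\"ustholz's analytic subgroup theorem applied to the exponential $\exp: \mathrm{Lie}(A_\C) \to A(\C)$: the algebraicity of $\tau$ supplies non-trivial $\overline{\Q}$-rational subspaces of $\mathrm{Lie}(A_\C)$ whose exponential images meet the period lattice, and W\"ustholz forces these to be Lie algebras of proper $\overline{\Q}$-algebraic subgroups of $A$. An induction on $\dim A$ then produces the CM decomposition, so $\mathrm{MT}(A)$, and therefore $G^{\And}(M)$, is a torus.

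The main obstacle is the transfer step. It is not automatic that $\overline{\Q}$-rationality of the Hodge filtration survives when passing from a direct summand $M$ back to the ambient motive $\mathfrak{h}(A)$: a summand can in principle acquire rationality by accident of decomposition. The reduction must therefore be engineered functorially so that $\mathrm{MT}(A)$ is essentially $\mathrm{MT}(M)$, typically up to a central isogeny, and the rationality must then be propagated through the Tannakian constructions without loss. Making this descent precise in the Andr\'e motivic framework is the technical heart of the argument; the transcendence input of W\"ustholz is the other deep ingredient, but it enters only after this reduction has been carried out.
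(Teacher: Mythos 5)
You should first note that the paper does not prove this statement at all: it is quoted, inside a remark, as the Main Theorem of Shiga--Wolfart together with Cohen's work, so there is no internal proof to compare against; what has to be judged is whether your reduction from abelian motives to those cited abelian-variety results is sound. Your global strategy (André's theorem that Hodge cycles on abelian varieties are motivated, so that $G^{\And}(M)=\mathrm{MT}(\omega_\Hdg(M))$ is a quotient of $\mathrm{MT}(A)$, followed by Wüstholz's analytic subgroup theorem via Shiga--Wolfart/Cohen) is the right one, and you correctly isolate the transfer of $\overline\Q$-rationality of the Hodge filtration from $M$ to $A$ as the crux. But the transfer step as you describe it cannot work in the form ``descend the rationality to $\mathcal{F}_A$ and conclude that $A$ is CM''. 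Take $M=\Q(-1)=\wedge^2\mathfrak{h}^1(E)$ for a non-CM elliptic curve $E$ over $\overline\Q$: the Hodge filtration of $M$ is defined over $\Q$, yet by Schneider's theorem the Hodge filtration of $E$ (equivalently $\tau(E)$) is transcendental. The conclusion of the theorem holds here ($\mathrm{MT}(M)=\mathbb{G}_m$ is a torus), but your intermediate claim is false. The reason is that the induced map of flag varieties $\mathcal{F}_A\to\mathcal{F}_M$ has positive-dimensional fibres as soon as the kernel of $\mathrm{MT}(A)\twoheadrightarrow\mathrm{MT}(M)$ is non-central, and that kernel is an almost-direct product of a central torus with some of the simple factors of $\mathrm{MT}(A)^{\mathrm{der}}$; these factors need not correspond to isogeny factors of $A$ (in Mumford's fourfolds $\mathrm{MT}(A)^{\mathrm{der}}$ is of type $\mathrm{SL}_2^3$ acting on $V_1\otimes V_2\otimes V_3$ while $A$ is simple), so one cannot in general shrink or modify $A$ to make the kernel central.

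What is actually needed is a statement attached to the quotient Shimura datum $(\mathrm{MT}(M),\mathcal{D}_M)$ rather than to the Siegel modulus of $A$: if $A$ is defined over $\overline\Q$ and the image of its period point in the compact dual $\mathcal{F}_M$ of the quotient datum is an $\overline\Q$-point, then $\mathrm{MT}(M)$ is a torus. This is the assertion that bi-algebraic points of Shimura varieties of abelian type are special. It does follow from the Siegel case, but only via a factor-wise refinement of the transcendence argument and the functoriality of these data --- this is precisely where Cohen's contribution and the later literature enter --- and it is genuinely stronger than the literal Main Theorem of Shiga--Wolfart about the algebraicity of the full modulus $\tau\in\mathfrak{H}_g$. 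Your write-up neither states nor supplies this refinement. Once it is granted, the remaining steps of your outline (André's identification of $G^{\And}(M)$ with $\mathrm{MT}(M)$ for abelian motives, and the observation that quotients of tori are tori, applied to $\mathrm{MT}(M)$ rather than to $\mathrm{MT}(A)$) are correct.
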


In the language of Theorem \ref{Theorem}, this assumption on the Hodge filtration translates into the fact that
$\overline{\{F^{\bullet}\}}^{\overline \Q-\mathrm{Zar}}$ is a point.
Theorem \ref{Theorem} implies that $ \mathcal{F}^{\And}$ is a point, which happens only if $G^{\And}(M)$ is a torus.
Hence Theorem \ref{SWC} corresponds to the special case of Theorem \ref{Theorem} where $\overline{\{F^{\bullet}\}}^{\overline \Q-\mathrm{Zar}}$ is zero-dimensional, and $M$ is an abelian motive.
\end{rmk}

\begin{cor}
Let $M$ be an André motive over $\overline \Q$. Assume the Grothendieck period conjecture \ref{conjgrothendieckperiod} holds for $M$. 
 Then the Hodge structure $\omega_\Hdg(M)$ is of maximal transcendence degree in the sense of Definition \ref{maxtrdeg}. 
\end{cor}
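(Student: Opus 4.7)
The plan is to extract the result as an essentially immediate consequence of Theorem \ref{Theorem} combined with the tautological observation that the Hodge filtration lies in the smaller flag variety $\mathcal{F}$. The key point to keep track of is in which ambient variety we are taking Zariski closures.

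First I would recall the setup: we have a closed immersion $\mathcal{F} \subseteq \mathcal{F}^{\And}$ of $\overline{\Q}$-varieties, and the Hodge filtration defines a complex point $F^{\bullet} \in \mathcal{F}(\C) \subseteq \mathcal{F}^{\And}(\C)$. By Theorem \ref{Theorem}, the Grothendieck period conjecture for $M$ gives
\[
\overline{\{F^{\bullet}\}}^{\overline{\Q}\textrm{-}\mathrm{Zar}} = \mathcal{F}^{\And},
\]
where the closure is a priori taken inside $\mathcal{F}^{\And}$.

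Next, since $F^{\bullet}$ lies in $\mathcal{F}(\C)$ and $\mathcal{F}$ is $\overline{\Q}$-Zariski closed inside $\mathcal{F}^{\And}$, the $\overline{\Q}$-Zariski closure of $F^{\bullet}$ in $\mathcal{F}^{\And}$ is automatically contained in $\mathcal{F}$. Combining with the previous displayed equality yields $\mathcal{F}^{\And} \subseteq \mathcal{F}$, and together with the tautological inclusion $\mathcal{F} \subseteq \mathcal{F}^{\And}$ we obtain $\mathcal{F} = \mathcal{F}^{\And}$. In particular the closure of $\{F^{\bullet}\}$ taken inside $\mathcal{F}$ equals all of $\mathcal{F}$, which by Definition \ref{maxtrdeg} is precisely the statement that $\omega_{\Hdg}(M)$ is of maximal transcendence degree.

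There is really no serious obstacle here; the only subtlety is the bookkeeping about the two flag varieties and making sure the Zariski closure statement of Theorem \ref{Theorem} is compatible with the fact that $F^{\bullet}$ a priori lives in the smaller $\mathcal{F}$. One could also phrase the argument purely in terms of dimensions: Theorem \ref{Theorem} gives $\dim \overline{\{F^{\bullet}\}}^{\overline{\Q}\textrm{-}\mathrm{Zar}} = \dim \mathcal{F}^{\And} \geq \dim \mathcal{F}$, but since the closure is contained in the irreducible $\mathcal{F}$ one must have equality and the closure must exhaust $\mathcal{F}$.
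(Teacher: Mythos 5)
Your argument is correct and is essentially the paper's own proof: the paper likewise observes $F^{\bullet} \in \mathcal{F}(\C) \subseteq \mathcal{F}^{\And}(\C)$ and concludes from Theorem \ref{Theorem} that $\overline{\{F^{\bullet}\}}^{\overline \Q-\mathrm{Zar}} = \mathcal{F} = \mathcal{F}^{\And}$, which is exactly your bookkeeping about the two flag varieties, just stated more tersely. Nothing is missing.
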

\begin{proof}
We have $F^{\bullet} \in \mathcal{F}(\C) \subseteq \mathcal{F}^{\And}(\C)$.
Theorem \ref{Theorem} implies that $\overline{\{F^{\bullet}\}}^{\overline \Q-\mathrm{Zar}} = \mathcal{F} = \mathcal{F}^{\And}$.
Hence $\omega_\Hdg(M)$ is of maximal transcendence degree.
\end{proof}

This gives a (conditional) purely Hodge theoretic criterion for showing that a complex algebraic variety is not defined over a number field:
Assume the Grothendieck period conjecture \ref{conjgrothendieckperiod}. Let $X$ be a smooth projective complex algebraic variety such that for some $n$, the Hodge structure $\HH_\B^n(X, \Q)$ is not of maximal transcendence degree.
Then $X$ does not have a model over $\overline \Q$.

\subsection{Hodge structures and André's generalized period conjecture}

We keep the notation of the previous section, but more generally allow $M$ to be an André motive over a finitely generated subfield $K \subset \C$.
Thus $H:= \omega_\Hdg(M)$ still denotes the Hodge realization of $M$, with associated flag variety $\mathcal{F}$, and the flag variety $\mathcal{F}^{\And}$ for $M$ is defined as in the previous section.

Using André's generalized Grothendieck period conjecture \ref{conjgeneralizedperiod}, we have the following generalization of Theorem \ref{Theorem}:

\begin{thm}[{cf. \cite[23.4.4]{AndreBook}}]\label{theoremgeneralized}
Let $K \subset \C$ be a finitely generated subfield and $M$ an André motive over $K$.
Denote by $H := \omega_\Hdg(M)$ the Hodge realization of $M$.
Assume André's generalized period conjecture holds for $M$.
Then $$\trdeg H \ge \dim \mathcal{F}^{\And} - \mathrm{trdeg}_\Q K.  $$
\end{thm}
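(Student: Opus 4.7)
The plan is to adapt the argument of Theorem \ref{Theorem} to the relative setting, being careful about base fields. Although $\mathcal{F}^{\And}$ is a priori only defined over $\overline \Q$, the de Rham filtration $F^{\bullet}_\dR$ lives over $K$, so the morphism $\Phi: \Omega^{\And}_M \to \mathcal{F}^{\And}$, $\varphi \mapsto \varphi(F^{\bullet}_\dR)$, constructed in the proof of Theorem \ref{Theorem}, is naturally defined over $K$ as a surjective $G^{\And}(M)_K$-equivariant morphism $\Omega^{\And}_M \to \mathcal{F}^{\And}_K := \mathcal{F}^{\And} \otimes_{\overline \Q} K$. Its fibers are cosets of the parabolic $P_\mu^{\And}$ and hence all have the same dimension $d := \dim G^{\And}(M) - \dim \mathcal{F}^{\And}$, and exactly as before $\Phi(c_M) = F^{\bullet}$.

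Next I would apply André's generalized period conjecture to $c_M \in \Omega^{\And}_M(\C)$, obtaining $\dim_K \overline{\{c_M\}}^{K-\mathrm{Zar}} \ge \dim G^{\And}(M) - \mathrm{trdeg}_\Q K$. Writing $V := \overline{\{c_M\}}^{K-\mathrm{Zar}}$, continuity of $\Phi$ together with $\Phi(c_M) = F^{\bullet}$ implies that $\Phi(V) \subseteq \overline{\{F^{\bullet}\}}^{K-\mathrm{Zar}}$. All fibers of $\Phi|_V$ have dimension at most $d$, so the standard fiber dimension inequality gives
\[
\dim_K \overline{\{F^{\bullet}\}}^{K-\mathrm{Zar}} \;\ge\; \dim \Phi(V) \;\ge\; \dim V - d \;\ge\; \dim \mathcal{F}^{\And} - \mathrm{trdeg}_\Q K.
\]

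Finally, I would descend from the $K$- to the $\overline \Q$-Zariski closure. Setting $W := \overline{\{F^{\bullet}\}}^{\overline \Q-\mathrm{Zar}} \subseteq \mathcal{F}^{\And}$, the base change $W \otimes_{\overline \Q} K$ is a closed $K$-subscheme of $\mathcal{F}^{\And}_K$ containing $F^{\bullet}$, and hence contains $\overline{\{F^{\bullet}\}}^{K-\mathrm{Zar}}$. Therefore $\dim_K \overline{\{F^{\bullet}\}}^{K-\mathrm{Zar}} \le \dim W = \trdeg H$, using the identification $\trdeg H = \dim \overline{\{F^{\bullet}\}}^{\overline \Q-\mathrm{Zar}}$ recorded in Section \ref{sectioninvariants}. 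Chaining the two inequalities yields the desired bound. Conceptually this is a re-indexing of Theorem \ref{Theorem} over $K$ rather than $\overline \Q$, with no new input beyond André's generalized conjecture; the only points meriting care are the bookkeeping of fields of definition for $\Phi$ and the final closure-comparison step, which should be the main (and rather modest) obstacle.
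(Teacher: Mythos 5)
Your proposal is correct and follows essentially the same route as the paper: the same morphism $\Phi: \Omega^{\And}_M \to \mathcal{F}^{\And}\otimes K$ defined over $K$, the same fiber-dimension inequality with fibers of dimension $\dim P_\mu^{\And}$, the same application of André's generalized conjecture to $\overline{\{c_M\}}^{K-\mathrm{Zar}}$, and the same final comparison $\trdeg H = \dim \overline{\{F^{\bullet}\}}^{\overline\Q-\mathrm{Zar}} \ge \dim_K \overline{\{F^{\bullet}\}}^{K-\mathrm{Zar}}$ (which you justify in slightly more detail than the paper does). No discrepancies of substance.
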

\begin{proof}
The proof is similar to the one of Theorem \ref{Theorem}, replacing the Grothendieck period conjecture \ref{conjgrothendieckperiod} by André's generalized Grothendieck period conjecture \ref{conjgeneralizedperiod}.
The motivated period torsor $\Omega_M^{\And}$ is now defined over $K$ and a torsor under $G^{\And}(M)_{K}$.
We still have a surjective morphism defined over $K$
$$\Phi: \Omega_M^{\And} \to \mathcal{F}^{\And} \otimes K, \,\, \varphi \mapsto \varphi( F^{\bullet}_\dR).$$ 
The complex comparison  $c_M \in \Omega_M^{\And}(\C)$ maps to $\Phi(c_M) = F^{\bullet} \in \mathcal{F}^{\And}(\C)$. Denoting by $\overline{\{F^{\bullet}\}}^{K-\mathrm{Zar}} \subseteq \mathcal{F}^{\And} \otimes K$ the $K$-Zariski closure of $F^{\bullet}$, 
it follows that $\Phi(\overline{\{c_M\}}^{K-\mathrm{Zar}})$ is dense in
$\overline{\{F^{\bullet}\}}^{K-\mathrm{Zar}} $.
Since all fibers of $\Phi$ have dimension $\dim P^{\And}_\mu$, it follows that
$$\dim_K \overline{\{F^{\bullet}\}}^{K-\mathrm{Zar}} \ge \dim_K \overline{\{c_M\}}^{K-\mathrm{Zar}} - \dim P^{\And}_\mu.$$
By André's generalized Grothendieck period conjecture \ref{conjgeneralizedperiod}, 
%(just $K$ and not $\overline K$?, dimension of Omega = dimension of $G^{\And}$?)
$$ \dim_K \overline{\{c_M\}}^{K-\mathrm{Zar}} \ge \dim G^{\And}(M) - \mathrm{trdeg}_\Q K.$$
We conclude that \begin{eqnarray*} \trdeg H  =   \dim_\Q \overline{\{F^{\bullet}\}}^{\overline \Q-\mathrm{Zar}} & \ge & \dim_K \overline{\{F^{\bullet}\}}^{K-\mathrm{Zar}} \\
 & \ge &  \dim_K \overline{\{c_M\}}^{K-\mathrm{Zar}} - \dim P^{\And}_\mu \\
  & \ge & \dim G^{\And}(M) - \mathrm{trdeg}_\Q K - \dim P^{\And}_\mu \\
  & = & \dim \mathcal{F}^{\And} - \mathrm{trdeg}_\Q K.
 \end{eqnarray*}

\end{proof}
\begin{rmk}
In the case where $M$ is the motive of an abelian variety, Theorem \ref{theoremgeneralized} can be found in \cite[23.4.4]{AndreBook}.
\end{rmk}
As $\mathcal{F} \subseteq \mathcal{F}^{\And}$, one obtains:
\begin{cor}\label{corgeneralized}
Let $K \subset \C$ be a finitely generated subfield and $M$ an André motive over $K$ with Hodge realization $H$.
Assume André's generalized period conjecture \ref{conjgeneralizedperiod} holds for $M$.
Then $$\trdeg H \ge \dim \mathcal{F} - \mathrm{trdeg}_\Q K.  $$
\end{cor}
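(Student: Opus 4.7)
The plan is to derive this corollary directly from Theorem \ref{theoremgeneralized}, which has just been established, by comparing the two flag varieties $\mathcal{F}$ and $\mathcal{F}^{\And}$. The theorem already supplies the apparently stronger inequality
$$ \trdeg H \ge \dim \mathcal{F}^{\And} - \mathrm{trdeg}_\Q K, $$
so the whole task reduces to establishing $\dim \mathcal{F} \le \dim \mathcal{F}^{\And}$.

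For this, I would simply invoke the closed immersion $\mathcal{F} \hookrightarrow \mathcal{F}^{\And}$ of $\overline{\Q}$-varieties that has already been recorded in the paragraph preceding Theorem \ref{Theorem}. Concretely, since the Mumford--Tate group $G$ sits inside the motivic Galois group $G^{\And}(M)$, the Hodge cocharacter $\mu: \mathbb{G}_{m,\C} \to G_{\C}$ also defines a cocharacter of $G^{\And}(M)_{\C}$, and the associated parabolic $P_{\mu} \subseteq G_{\C}$ is precisely the preimage of $P^{\And}_{\mu} \subseteq G^{\And}(M)_{\C}$. This identifies the $G_{\C}$-orbit of the base point in $\mathcal{F}^{\And}_{\C}$ with $\mathcal{F}_{\C}$, and the description of André points via the $G(\C)$-conjugacy class of $\mu$ ensures the resulting closed immersion descends to $\overline{\Q}$. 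Both flag varieties are smooth and connected (hence irreducible), so a closed immersion between them automatically forces $\dim \mathcal{F} \le \dim \mathcal{F}^{\And}$.

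Combining the two inequalities yields $\trdeg H \ge \dim \mathcal{F}^{\And} - \mathrm{trdeg}_\Q K \ge \dim \mathcal{F} - \mathrm{trdeg}_\Q K$, which is the claimed bound. There is no substantive obstacle here: the corollary is really just the observation that, while $\mathcal{F}$ rather than $\mathcal{F}^{\And}$ is the natural receptacle for the Hodge filtration of $H$, the dimension bound coming from Theorem \ref{theoremgeneralized} is a fortiori valid for the Mumford--Tate flag variety. The content has been shifted entirely into Theorem \ref{theoremgeneralized}, and this corollary is the form in which the bound will be used later to feed into the proof of Conjecture \ref{conj}.
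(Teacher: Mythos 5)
Your proposal is correct and matches the paper's argument exactly: the paper derives the corollary in one line from Theorem \ref{theoremgeneralized} via the closed immersion $\mathcal{F} \subseteq \mathcal{F}^{\And}$ already recorded before Theorem \ref{Theorem}, which gives $\dim \mathcal{F} \le \dim \mathcal{F}^{\And}$. Your additional justification of that immersion (via $P_\mu = G_\C \cap P_\mu^{\And}$ and completeness of the $G_\C$-orbit) is a correct elaboration of a fact the paper simply asserts.
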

 
\section{Hodge structures coming from geometry}

\subsection{Hodge structures not coming from geometry}

Let $M$ be a pure André motive over $\C$, and $H := \omega_\Hdg(M)$ the associated $\Q$-Hodge structure. In this section, we prove an upper bound for the transcendence degree of a field of definition of $M$ in terms of data attached to the Hodge structure $H$.
Denote by $G$ the Mumford-Tate group of $H$.
Recall (from section \ref{sectioninvariants}) that the Hodge cocharacter defines a Hodge filtration $F^{\bullet} \mathfrak{g}$ on the (complex) Lie algebra $\mathfrak{g}_\C$ of $G$.

\begin{prop}\label{propdescent}
Assume that Hodge cycles are motivated (Conjecture \ref{conjmotivated}). 
Let $M$ be a pure André motive over $\C$. Denote by $H := \omega_\Hdg(M)$ the associated $\Q$-Hodge structure with Mumford-Tate group $G$.
Then $M$ is isomorphic to the base change of an André motive defined over a subfield $K \subset \C$ with $$\mathrm{trdeg}_\Q K \le \dim_\C F^{-1}\mathfrak{g} -\dim_\C F^{0} \mathfrak{g}. $$
\end{prop}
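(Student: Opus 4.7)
The plan is to implement the strategy sketched in the introduction: spread $M$ out to a family of motives over a smooth variety $S/\overline{\Q}$; apply the Bakker--Brunebarbe--Tsimerman algebraicity theorem together with Griffiths transversality to bound the image of the associated period map; and finally use the full faithfulness of $\omega_\Hdg$ (Conjecture \ref{conjmotivated}) to replace $M$ by the fiber of $\mathbb{M}$ over a point of small transcendence degree.

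First I would write $M = (X,p,n)$ with $X/\C$ smooth projective and $p$ a motivated idempotent, and, using that $\C$ is a filtered colimit of smooth $\overline{\Q}$-algebras, find a smooth geometrically irreducible variety $S/\overline{\Q}$, a smooth projective family $\mathcal{X}\to S$ with a relative motivated idempotent $\mathbf p$, and a point $s_0\in S(\C)$ such that the resulting family of André motives $\mathbb{M} := (\mathcal{X},\mathbf p,n)$ satisfies $\mathbb{M}_{s_0}\cong M$. After replacing $S$ by the $\overline{\Q}$-Zariski closure of $\{s_0\}$ we may assume that $s_0$ is $\overline{\Q}$-generic in $S$. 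Invoking Cattani--Deligne--Kaplan, and using Conjecture \ref{conjmotivated} to ensure that the relevant Hodge-locus components come from motivic cycles and so are defined over $\overline{\Q}$, we may further replace $S$ by an irreducible component of the Hodge locus through $s_0$ on which the generic Mumford--Tate group of the variation $\mathbb{H}:=\omega_\Hdg(\mathbb{M})$ equals $G$.

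Next consider the period map $\varphi\colon S_\C^{\mathrm{an}}\to \mathcal{D}/\Gamma$ attached to $\mathbb{H}$, with $\mathcal{D}$ the Mumford--Tate domain for $G$. Griffiths transversality forces $d\varphi$ to take values in the horizontal tangent bundle, giving the dimension bound
$$\dim\varphi(S_\C^{\mathrm{an}})\;\le\;\dim_\C F^{-1}\mathfrak{g}-\dim_\C F^{0}\mathfrak{g}.$$
By \cite{BBT}, $\varphi$ factors as $S_\C^{\mathrm{an}}\xrightarrow{\psi^{\mathrm{an}}}Y^{\mathrm{an}}\hookrightarrow\mathcal{D}/\Gamma$ for some dominant algebraic morphism $\psi\colon S_\C\to Y$ to a quasi-projective $\C$-variety; an arithmetic refinement, exploiting the $\overline{\Q}$-structure on $S$ and on the algebraic de Rham bundle with its Hodge filtration, shows that after replacing $Y$ by a suitable quasi-finite cover both $Y$ and $\psi$ can be taken to be defined over $\overline{\Q}$. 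Setting $y_0 := \psi(s_0)\in Y(\C)$ and letting $Z\subseteq Y$ be its $\overline{\Q}$-Zariski closure, we obtain a subfield $K\subset\C$ with $\mathrm{trdeg}_\Q K=\dim Z\le\dim_\C F^{-1}\mathfrak{g}-\dim_\C F^{0}\mathfrak{g}$ over which $y_0$ is defined.

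Finally I descend $M$ itself. The fiber $\psi^{-1}(y_0)\subseteq S_K$ is nonempty (it contains $s_0$), so it has a closed point $s$ with residue field $k(s)$ finite over $K$. After choosing an embedding $k(s)\hookrightarrow\C$ extending the embedding $K\hookrightarrow\C$ used to place $y_0$ in $Y(\C)$, we view $s$ as a $\C$-point of $S$. By construction $\psi(s)=y_0=\psi(s_0)$, hence $\varphi(s)=\varphi(s_0)$, which means $\omega_\Hdg((\mathbb{M}_s)_\C)\cong H$ as polarizable $\Q$-Hodge structures. Conjecture \ref{conjmotivated} then upgrades this to an isomorphism $(\mathbb{M}_s)_\C\cong M$ of André motives over $\C$, and $K':=k(s)\subseteq\C$ has $\mathrm{trdeg}_\Q K'\le\dim_\C F^{-1}\mathfrak{g}-\dim_\C F^{0}\mathfrak{g}$ as required. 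The hard part is the arithmetic refinement in the second step, i.e.\ descending the algebraic image $Y$ (and the map $\psi$) from the BBT theorem to $\overline{\Q}$ after a quasi-finite cover; this requires combining the algebraicity of the period map with the $\overline{\Q}$-structure on the Hodge bundle via an o-minimality and Galois-conjugacy argument in the spirit of Bakker--Klingler--Tsimerman, and the related Hodge-locus reduction in the first step also depends on Conjecture \ref{conjmotivated} to guarantee $\overline{\Q}$-definability of the motivic loci used.
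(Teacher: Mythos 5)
Your proposal follows essentially the same route as the paper: spread $M$ out to a family over $\overline{\Q}$ with generic Mumford--Tate group $G$ (using Conjecture \ref{conjmotivated}), factor the period map algebraically via \cite{BBT} with the $\overline{\Q}$-defined Stein-type fibration $S\to B$ playing the role of your ``quasi-finite cover of $Y$ defined over $\overline{\Q}$'', bound $\dim B$ by Griffiths transversality, and descend to a point of the fiber over $b=f(s_0)$. The only real divergence is the endgame: you invoke full faithfulness of $\omega_{\Hdg}$ to upgrade the isomorphism of Hodge realizations at the two points of the fiber to an isomorphism of motives, whereas the paper connects the two points by a curve and applies André's (unconditional) theorem of the fixed part; both are valid here since Conjecture \ref{conjmotivated} is assumed anyway, but the paper's variant isolates more precisely where the conjecture is actually needed.
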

\begin{proof}
Following \cite[Remarques (ii) after Théorème 5.2]{AndreIHES}, we can spread out the motive $M$ as follows:
$M$ can be defined over a finitely generated extension $L$ of $\Q$.
There is a smooth quasi-projective variety $S$ defined over a number field such that $S$ has function field $L$, and a family of motives $\mathbb{M} \to S$ defined over $\overline \Q$ in the sense of \cite[§5.2]{AndreIHES} such that the fiber $ \mathbb{M}_s=M$ for some $s \in S(\C)$.
Furthermore, the generic motivic Galois group of $\mathbb{M}$ is $G^{\And}(M)$.

Let $\mathbb{V}$ denote the variation of Hodge structure over $S_\C$ attached to $\mathbb{M}$. If we assume Conjecture \ref{conjmotivated}, then $G^{\And}(M) = G$, and hence the generic Mumford-Tate group of $\mathbb{V}$ is also $G$. Denote by 
$$\varphi: S_\C^{\an} \to \mathcal{D} / \Gamma  $$ the associated period map, where the target is the quotient of a Mumford-Tate domain for the group $G$ by a discrete group. Hence $\mathcal{D} \subseteq \mathcal{F}_\C^\an$ is an open complex analytic subvariety of the flag variety attached to $H$.
By \cite[Theorem 1]{BBT}, there is a factorization
$$ \varphi: S_\C^{\an} \to B_\C^{\an} \to Y^{\an} \subset \mathcal{D} / \Gamma, $$
where 
\begin{enumerate}
    \item $f: S \to B$ is a morphism of algebraic varieties over $\overline \Q$ with connected fibers,
    \item
    $B_\C \to Y$ is a quasi-finite morphism of algebraic varieties over $\C$,
    \item
    $Y^\an \subset \mathcal{D} / \Gamma$ is a closed immersion of analytic spaces,
\end{enumerate}
and such that the composition $S_\C \to B_\C \to Y$ is dominant.

We claim that $$\dim_{\overline \Q} B \le \dim_\C F^{-1}\mathfrak{g} - \dim_\C F^{0} \mathfrak{g}. $$
Namely, the tangent space of $\mathcal{D} / \Gamma$ at $\varphi(s)$ can be identified with $\mathfrak{g} / F^{0} \mathfrak{g}$, and the right hand side is precisely the dimension of the horizontal tangent space $T_{h, \varphi(s)}(\mathcal{D} / \Gamma) = F^{-1}\mathfrak{g} / F^{0} \mathfrak{g}$.
By Griffiths transversality, % $\mathrm{d}\varphi(T_s S) \subset T_{h, \varphi(s)}(\mathcal{D} / \Gamma)$.
$T_{\varphi(s)} Y\subset T_{h, \varphi(s)}(\mathcal{D} / \Gamma) $.
%As $Y \subset \mathcal{D} / \Gamma$ is a closed immersion, 
We get $\dim_\C Y \le \dim_\C F^{-1}\mathfrak{g} - \dim_\C F^{0} \mathfrak{g}$.
Since $B_\C$ is quasi-finite over $Y$, it follows that $\dim_{\overline \Q} B  \le \dim_\C F^{-1}\mathfrak{g} - \dim_\C F^{0} \mathfrak{g}$.

Let $b:= f(s) \in B(\mathbb{C})$. By the above dimension argument, $b$ is defined over a subfield $K \subset \C$ with $\mathrm{trdeg}_\Q K \le \dim_\C F^{-1}\mathfrak{g} - \dim_\C F^{0} \mathfrak{g}$.
We may assume that $K$ is algebraically closed.
Let $Z := f^{-1}(b) \subseteq S$, a connected closed algebraic subvariety defined over $K$.
We claim that for every $x \in Z(\C)$, the André motive $\mathbb{M}_x$ is isomorphic to $M$. This follows from André's Theorem of the fixed part \cite[Théorème 0.5]{AndreIHES}.
Namely, for every $x \in Z(\C)$, there is a smooth connected curve $C$ and a morphism $g: C \to Z$ whose image contains both $x$ and $s$.
We can pull back the family of motives $\mathbb{M}$ to $C$, and note that the associated variation of Hodge structure $\mathbb{V}_{|C}$ is constant (and so, up to taking a finite cover, the underlying local system).
The argument in the proof of \cite[Théorème 0.5]{AndreIHES} shows that $\mathbb{M}_x \cong \mathbb{M}_s =M$. Indeed, both are isomorphic to the fixed part of the family of motives $\mathbb{M}_{|C}$, which is shown to be an André motive and independent of the point in $C(\C)$ in the proof of loc.cit. 
For any point $x \in Z(K)$, we conclude that $\mathbb{M}_x$ is an André motive over $K$ whose base change to $\C$ is isomorphic to $M$, as desired.
\end{proof}
\begin{rmk}
In the above proof, the assumption that Hodge cycles are motivated is needed to ensure that we can spread out $M$ to a family of motives $\mathbb{M}$ defined over $\overline \Q$ which has generic Mumford-Tate group equal to $G$.
We do not need the full power of this assumption for this, it is enough to know that special subvarieties for families of motives over $\overline \Q$ are defined over $\overline \Q$.
For example, if the Mumford-Tate group $G$ is a torus, then Proposition \ref{propdescent} predicts that $M$ can be defined over $\overline \Q$, which would follow from the definability of CM points over $\overline \Q$.
\end{rmk}

\begin{thm}\label{thmHSnotgeometric}
Let $M$ be a pure André motive over $\C$ and $H := \omega_\Hdg(M)$ the associated $\Q$-Hodge structure.
Assume that Hodge cycles are motivated (Conjecture \ref{conjmotivated}) and that André's generalized period conjecture \ref{conjgeneralizedperiod} holds true.
Then $$\trdeg H \ge \mathrm{hcodim \,} H.$$
 \end{thm}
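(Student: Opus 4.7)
The plan is to chain together the two bounds already established in the paper. By Proposition \ref{propdescent}, using the assumption that Hodge cycles are motivated, I can descend $M$ to a motive defined over an algebraically closed subfield $K \subset \C$ satisfying
\[\mathrm{trdeg}_\Q K \le \dim_\C F^{-1}\mathfrak{g} - \dim_\C F^{0}\mathfrak{g}.\]
In particular, $K$ is finitely generated over $\Q$, so Corollary \ref{corgeneralized} applies to the descended motive and, granting André's generalized period conjecture \ref{conjgeneralizedperiod}, yields
\[\trdeg H \ge \dim \mathcal{F} - \mathrm{trdeg}_\Q K.\]

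Next I combine these two inequalities. Recalling from Section \ref{sectioninvariants} that $\dim \mathcal{F} = \dim_\C \mathfrak{g}_\C - \dim_\C F^{0}\mathfrak{g}$ and $\mathrm{hcodim \,} H = \dim_\C \mathfrak{g}_\C - \dim_\C F^{-1}\mathfrak{g}$, substituting the bound on $\mathrm{trdeg}_\Q K$ gives
\[\trdeg H \ge \bigl(\dim_\C \mathfrak{g}_\C - \dim_\C F^{0}\mathfrak{g}\bigr) - \bigl(\dim_\C F^{-1}\mathfrak{g} - \dim_\C F^{0}\mathfrak{g}\bigr) = \dim_\C \mathfrak{g}_\C - \dim_\C F^{-1}\mathfrak{g} = \mathrm{hcodim \,} H,\]
which is the desired inequality.

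There is no real obstacle at this stage, since both halves have already been proved. The only point that needs a slight justification is that Corollary \ref{corgeneralized} is being applied to the descended motive rather than to $M$ itself: this is legitimate because the Hodge realization of a motive is invariant under extension of the base field (the Betti realization and the Hodge decomposition depend only on the complex analytification), so the descended motive still has Hodge realization $H$ with the same invariants $\trdeg H$ and $\mathrm{hcodim \,} H$. Thus the proof reduces to transparently assembling Proposition \ref{propdescent} and Corollary \ref{corgeneralized} via the defining identities for $\dim \mathcal{F}$ and $\mathrm{hcodim \,} H$.
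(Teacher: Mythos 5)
Your proposal is correct and follows exactly the paper's own proof: apply Proposition \ref{propdescent} to descend $M$ to a finitely generated field $K$ with the stated bound on $\mathrm{trdeg}_\Q K$, apply Corollary \ref{corgeneralized} to the descended motive, and combine via the identities $\dim \mathcal{F} = \dim_\C \mathfrak{g}_\C - \dim_\C F^{0}\mathfrak{g}$ and $\mathrm{hcodim \,} H = \dim_\C \mathfrak{g}_\C - \dim_\C F^{-1}\mathfrak{g}$. Your added remark that the Hodge realization is unchanged under base change to $\C$ is a correct and harmless clarification of a point the paper leaves implicit.
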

\begin{proof}
By Proposition \ref{propdescent}, the André motive $M$ is isomorphic to the base change of an André motive $N$ defined over a field $K \subset \C$ with $$\mathrm{trdeg}_\Q K \le \dim_\C F^{-1}\mathfrak{g} -\dim_\C F^{0} \mathfrak{g}.$$
Applying Corollary \ref{corgeneralized} to $N$, we obtain
\begin{eqnarray*} \trdeg H & \ge & \dim \mathcal{F} - \mathrm{trdeg}_\Q K \\ 
& = & \dim_\C \mathfrak{g}_\C -\dim_\C F^{0} \mathfrak{g} - \mathrm{trdeg}_\Q K \\
& \ge & \dim_\C \mathfrak{g}_\C -\dim_\C F^{0} \mathfrak{g} - (\dim_\C F^{-1}\mathfrak{g} -\dim_\C F^{0} \mathfrak{g}) \\
& = & \mathrm{hcodim \,} H. \end{eqnarray*}
\end{proof}

\begin{rmk}
    Let us emphasize that the origin of the non-trivial bound in Theorem \ref{thmHSnotgeometric} is Griffiths transversality.
    Indeed, without taking Griffiths transversality into account, Proposition \ref{propdescent} would produce the more naive bound $\mathrm{trdeg}_\Q K \le \dim_\C \mathfrak{g}_\C -\dim_\C F^{0} \mathfrak{g}$. Plugging this into the proof of Theorem \ref{thmHSnotgeometric} yields nothing but the trivial bound $\trdeg H \ge 0$.
\end{rmk}

\subsection{Distribution of admissible Hodge structures}\label{distribution}

Let $H$ be a pure $\Q$-Hodge structure with Mumford-Tate group $G$ and $\rho: \mathbb{S} \to \mathrm{GL}(H_\R)$ the associated representation of the Deligne torus.
The \emph{Mumford-Tate domain} of $H$ is the $G(\R)$-orbit of $\rho$, seen as an open complex analytic subvariety $ \mathcal{D} \subseteq \mathcal{F}_\C^{\an}$.

A point $x \in \mathcal{D}$ defines a $\Q$-Hodge structure $H_x$.
The classical argument mentioned in the introduction says the following:
As soon as $\mathrm{hcodim \,} H > 0$, the set of points $x \in \mathcal{D}$ for which the Hodge structure $H_x$ comes from geometry is contained in a countable union of strict closed analytic subvarieties.
Indeed, by spreading out, every André motive over $\C$ lies in one of the countably many families of motives defined over $\overline \Q$.
For every such family, the locus where the image of the period map meets $\mathcal{D}$ is contained in a strict closed analytic subvariety. 
This can be seen from the fact that Griffiths transversality is a non-trivial constraint for $\mathrm{hcodim \,} H > 0$.

In contrast to that, we want to study the set of points in $x \in \mathcal{D}$ which satisfy the condition \begin{equation}\label{InequalityDistribution} \trdeg H_x < \mathrm{hcodim \,} H_x.\end{equation} Since points lying in the same $G(\Q)$-orbit define isomorphic $\Q$-Hodge structures, this set is stable under the action of $G(\Q)$.

The Mumford-Tate group of $H_x$ is a subgroup $G_x \subseteq G$.
We denote by $\mathcal{D}^\circ \subseteq \mathcal{D}$ the set of \emph{Hodge generic points}, i.e. points where this inclusion is an equality.
The complement of the set of Hodge generic points is the intersection of $\mathcal{D}$ with countably many closed algebraic subvarieties of $\mathcal{F}$, which are flag varieties attached to smaller Mumford-Tate groups.

For all $x \in \mathcal{D}^\circ$, we have $\mathrm{hcodim \,} H_x = \mathrm{hcodim \,} H$.
Therefore the set of points $x \in \mathcal{D}^{\circ}$ which satisfy condition (\ref{InequalityDistribution}) is equal to
\begin{equation}\label{nonadmissibleset} \mathcal{D}^\circ \cap \bigcup_{\substack{Z \subseteq \mathcal{F}, \\ \dim Z < \mathrm{hcodim \,} H}} Z(\C). \end{equation}
Here the union runs over all closed $\overline \Q$-subvarieties of $\mathcal{F}$ of dimension less than $\mathrm{hcodim \,} H$.

This shows that the necessary criterion we formulate is far from sufficient:
By the above argument, we expect that the points of $\mathcal{D}$ corresponding to Hodge structures coming from geometry are contained in a countable union of strict closed analytic subvarieties.
However, our criterion only \emph{excludes} a countable union of strict closed subvarieties!

\subsection{Examples}\label{sectionexamples}

\subsubsection{CM Hodge structures}
A Hodge structure $H$ is called a \emph{CM Hodge structure} if the associated Mumford-Tate group $G$ is a torus .
For such Hodge structures, $\dim \mathcal{F}=0$, and consequently $\mathrm{hcodim \,} H = \trdeg H = 0$.
In particular, Theorem \ref{thmHSnotgeometric} causes no restriction in this case.
In fact, it is known that all (polarizable) CM Hodge structures come from geometry (cf. \cite[§7]{SerreProperties}, the proof is also spelled out in \cite[Proposition 4.6]{MilneFinite}).

\subsubsection{Hodge structures of Shimura type}

We have $\mathrm{hcodim \,} H=0$ if and only if $F^{-1} \mathfrak{g} = \mathfrak{g}_\C$. 
In this case we say that the Hodge structure is \emph{of Shimura type}.
In the terminology of \cite[Definition 4.13]{BKU}, this means that $H$ has "level one" (at least in the irreducible case).
For Hodge structures of Shimura type, Theorem \ref{thmHSnotgeometric} poses no restrictions on the Hodge structures coming from geometry.
This corresponds to the fact that Griffiths transversality is a trivial constraint. For example, all polarizable Hodge structures of type $(1,0), (0,1)$ come from abelian varieties.

Conversely, Theorem \ref{thmHSnotgeometric} implies:

\begin{cor} \label{CorShimuraType}
Assume that Hodge cycles are motivated and that André's generalized period conjecture holds true.
Let $H$ be a pure Hodge structure coming from geometry.
If the Hodge filtration $F^{\bullet} \subseteq H \otimes_\Q \C$ is defined over $\overline \Q$, then $H$ is of Shimura type.
\end{cor}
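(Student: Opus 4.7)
The plan is to deduce this as a direct specialization of Theorem \ref{thmHSnotgeometric}. The hypothesis that the Hodge filtration $F^{\bullet} \subseteq H \otimes_\Q \C$ is defined over $\overline{\Q}$ means, by Definition \ref{trdeg}, that one can take the field $K = \overline{\Q}$ in the definition of the transcendence degree of $H$. Since $\mathrm{trdeg}_\Q \overline{\Q} = 0$, this forces $\trdeg H = 0$.

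Next, I would invoke Theorem \ref{thmHSnotgeometric}, whose hypotheses (motivated Hodge cycles and André's generalized period conjecture) are precisely what we are assuming, together with the fact that $H$ comes from geometry. The theorem gives the inequality $\trdeg H \ge \mathrm{hcodim}\, H$. Combining this with the vanishing of $\trdeg H$ from the previous step yields $\mathrm{hcodim}\, H \le 0$.

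Finally, one observes that $\mathrm{hcodim}\, H \ge 0$ always holds, since by Definition \ref{defnhorizontalcodimension} it is the codimension of the subspace $F^{-1}\mathfrak{g} \subseteq \mathfrak{g}_\C$. Hence $\mathrm{hcodim}\, H = 0$, which is exactly the condition for $H$ to be of Shimura type. There is no substantive obstacle here — the entire argument is a two-line chain of inequalities once the invariants are read off from the hypothesis; the content of the corollary lies in packaging Theorem \ref{thmHSnotgeometric} in the contrapositive form most directly relevant to the classical question of when Hodge filtrations descend to $\overline{\Q}$.
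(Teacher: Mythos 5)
Your proposal is correct and is exactly the argument the paper intends (the paper states the corollary as an immediate consequence of Theorem \ref{thmHSnotgeometric} without writing out a proof): definability of $F^{\bullet}$ over $\overline{\Q}$ gives $\trdeg H = 0$, the theorem gives $0 = \trdeg H \ge \mathrm{hcodim}\, H \ge 0$, and $\mathrm{hcodim}\, H = 0$ is precisely the condition $F^{-1}\mathfrak{g} = \mathfrak{g}_\C$ defining Shimura type.
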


\subsubsection{Hodge structures of type $(n,0,...,0,n)$}

These Hodge structures were studied by Totaro in \cite{Totaro}.
Griffiths transversality does not allow positive dimensional families of Hodge structures of this type, and hence $\mathrm{hcodim \,} H = \dim \mathcal{F}$.
As was pointed out in the introduction of \cite{Totaro}, this means that only countably many such Hodge structures should come from geometry.
In this case, $H$ is of Shimura type if and only if it is a CM Hodge structure \cite[Corollary 4.2]{Totaro}. By the analysis in \cite[Theorem 3.1]{Totaro}, there exist examples which are not CM (e.g. of type $(2,0,2)$ with endomorphisms by a quaternion algebra over $\Q$).

It follows from Corollary \ref{CorShimuraType} that assuming the relevant conjectures, a Hodge structure of type $(n,0,...,0,n)$ whose Hodge filtration is defined over $\overline \Q$ does not come from geometry unless it is of CM type.  

Once again we see that our criterion is far from being sufficient, since it still allows uncountably many Hodge structures of type $(n,0,...,0,n)$.

\subsubsection{Calabi-Yau type Hodge structures}

In this example we consider rank four Hodge structures of weight $3$ of type $(1,1,1,1)$, i.e. with $\dim H^{0,3} = \dim H^{1,2}=1$.
A generic such Hodge structure $H$ has Mumford-Tate group $\mathrm{GSp}_4$, and the associated flag variety has dimension $\dim \mathcal{F} = 4$. A detailed description of the Mumford-Tate domain $\mathcal{D}$ and its Mumford-Tate subdomains can be found in \cite[Theorem VII.F.1]{MTdomains}.
One computes that $\mathrm{hcodim \,} H =2$. 
In this case, Theorem \ref{thmHSnotgeometric} predicts that any such generic $H$ which comes from geometry satisfies $\trdeg H \ge 2$.

\bibliographystyle{amsalpha}
\bibliography{bib}

\end{document}